\documentclass[12pt]{amsart}

\usepackage[all]{xy}
\usepackage{enumerate}

\usepackage{amsmath, amssymb, amsthm}

\usepackage{graphicx}%
\usepackage{geometry}
\usepackage{mdframed}
\usepackage{multirow}%
\usepackage{amsmath,amssymb,amsfonts}%
\usepackage{amsthm}%
\usepackage{mathrsfs}%
\usepackage[title]{appendix}%
\usepackage{xcolor}%
\usepackage{textcomp}%
\usepackage{manyfoot}%
\usepackage{booktabs}%
\usepackage{algorithm}%
\usepackage{algorithmicx}%
\usepackage{algpseudocode}%
\usepackage{caption}
\usepackage{listings}%

\usepackage{enumitem}
\usepackage[normalem]{ulem} 

\usepackage[bookmarksnumbered=true]{hyperref} 
\usepackage[normalem]{ulem}
\hypersetup{
     colorlinks = true,
     linkcolor = blue,
     anchorcolor = blue,
     citecolor = blue,
     filecolor = blue,
     urlcolor = blue
}
\usepackage{url}
\usepackage{graphicx}%
\usepackage{multirow}%
\usepackage{amsmath,amssymb,amsfonts}%
\usepackage{amsthm}%
\usepackage{mathrsfs}%
\usepackage[title]{appendix}%
\usepackage{xcolor}%
\usepackage{textcomp}%
\usepackage{manyfoot}%
\usepackage{booktabs}%
\usepackage{algorithm}%
\usepackage{algorithmicx}%
\usepackage{algpseudocode}%
\usepackage{listings}%

\newtheorem{theorem}{Theorem}[section]
\newtheorem{corollary}[theorem]{Corollary}
\newtheorem{proposition}[theorem]{Proposition}
\newtheorem{lemma}[theorem]{Lemma}

\theoremstyle{definition}

\theoremstyle{remark}
\newtheorem{remark}[theorem]{Remark}
\numberwithin{equation}{section}

\title{Enumerating finite O-sequences: sub-Fibonacci behavior and growth estimates}

\author[F.~Cioffi]{Francesca Cioffi}
\address{Dip.~di Matematica e Appl. \\ Universit\`a degli Studi di Napoli Federico II\\ Via Cintia \\ 80126 Napoli \\ Italy.}
\email{\href{mailto:cioffifr@unina.it}{cioffifr@unina.it}}
\email{\href{mailto:maguida@unina.it}{maguida@unina.it}}

\author[M.~Guida]{Margherita Guida}

\author[E.~Pirozzi]{Enrica Pirozzi}
\address{Dipartimento di Matematica e Fisica\\ Università degli Studi della Campania Vanvitelli\\ viale Lincoln, 5 \\ 81100 Caserta\\Italy.}
\email{\href{mailto:enrica.pirozzi@unicampania.it}{enrica.pirozzi@unicampania.it}}

\subjclass[2020]{primary 05A15, 05A16; secondary 13D40, 11B83, 05-08}
\keywords{finite $O$-sequence; sub-Fibonacci sequence; truncated generating function; growth estimate}

\begin{document}

\begin{abstract}Let $O_d$ denote the number of finite $O$-sequences of multiplicity $d$, namely the Hilbert functions of standard graded Artinian quotients of polynomial rings over a field. Starting from an iterative formula for computing $O_d$, we pursue two complementary directions. 
First, letting $A_d$ be the number of the finite $O$-sequences of multiplicity~$d$ whose last non-zero element is strictly larger than~$1$, we prove that the sequence $(A_{d+2})_{d\geq 1}$ is sub-Fibonacci. This result gives an enhancement of the sub-Fibonacci behavior of $(O_d)_{d\geq 1}$. Then, we provide a new algorithm for computing $O_d$, with more efficient performances than other available algorithms. We use the computed data and statistical methods to obtain an empirical calibration, in the interval $1\leq d \leq 1100$, of the Stanley-Zanello asymptotic upper bound for $\log(O_d)$ that better fits the observed values of $\log(O_d)$. An analogous study of the Stanley-Zanello asymptotic lower bound for $\log(O_d)$ is also carried out. The same method can be applied in every interval where the data are known. Some consequent prediction estimates are proposed. We also show that the sequence $(O_d/O_{d-1})_{d\geq 2}$ is strongly Cesàro convergent to~$1$. As a byproduct, we show that, if the sequence $(O_d/O_{d-1})_{d\ge 2}$ converges, then its limit must be equal to $1$, thereby giving a negative answer to a question posed by L. G. Roberts in 1992 under the assumption of convergence.
\end{abstract}

\keywords{$O$-sequence, lex-segment ideal, decomposition of order ideals, empirical finite-range calibration}

\subjclass[2020]{primary 05A15, 11B83, 13D40; secondary 65F20, 05A16}

\maketitle

\section*{Introduction}

Finite O-sequences are the Hilbert functions of standard graded Artinian $K$-algebras over a field~$K$, and their combinatorics is governed by growth conditions (see \cite{Ma,H66}), with extremal behaviour realized by lex-segment ideals. Despite this structural description, quantitative problems such as enumerating finite O-sequences of a given multiplicity and analogous questions remain difficult \cite{MC, CG, CLM, ES, RR, SL, SZ}. 
For a positive integer $d$, let $O_d$ denote the number of finite O-sequences of multiplicity~$d$.  

Recently, in \cite{CG} it was shown that the sequence $(O_d)_d$ is sub-Fibonacci, according to an extension of the definition given by P.~C. Fishburn and F.~S. Roberts in \cite{FR}. Moreover it was observed that every term of the sequence $(O_d/O_{d-1})_d$ with $d\geq 6$ is bounded above by the golden ratio. 
This analysis and an explicit computation of the $A_d$ $O$-sequences of multiplicity $d$ with last non-zero value strictly larger than $1$ also produce an elementary method for computing~$O_d$, giving its values up to $d=60$. In addition, an iterative formula for $O_d$ is obtained by exploiting a decomposition of lex-segment ideals introduced by S.~Linusson in \cite{SL}. 
Starting from this iterative formula, in this paper we pursue two complementary directions.

First, we prove that the sequence $(A_{d+2})_{d\ge 1}$ is sub-Fibonacci, hence  giving an enhancement of the sub-Fibonacci behavior of $(O_d)$, since $O_d=O_{d-1}+A_d$, for every $d\geq 1$.

Second, we turn the iterative formula into an effective computation by encoding the required cardinalities via truncated generating functions and updating them layer-by-layer while storing only necessary intermediate data. This yields a memory-efficient algorithm that allows us to compute $O_d$ for all $1\leq d\leq 1100$. The resulting dataset enables an experimental study of the asymptotic upper and lower bounds of $\log(O_d)$ given in \cite{SZ} by R. Stanley and F. Zanello. We propose an empirical finite-range calibration of the upper bound that preserves the ${\sqrt{d}} \log(d)$ growth but substantially reduces the distance from the observed values of $\log(O_d)$ in the interval $1\leq d\leq 1100$. An analogous study of the Stanley-Zanello asymptotic lower bound for $\log(O_d)$ is also carried out. Using the Stanley-Zanello upper bound, we also show that the sequence $(O_d/O_{d-1})_{d\geq 2}$ is strongly Cesàro convergent to~$1$. As a consequence, we obtain that, if the sequence $(O_d/O_{d-1})_{d\geq 2}$ converges, then its limit is equal to $1$, giving a negative answer to the question posed by L. G. Roberts in \cite{RR}, which asked if the limit of $(O_d/O_{d-1})_{d\geq 2}$ is strictly larger than $1$ (see Section~\ref{sec:consequences}).

The paper is organized as follows. Section~\ref{sec:prelim} recalls background on O-sequences. Section~\ref{sec:iterative formula} develops the iterative formula and its algorithmic presentation, and establishes the sub-Fibonacci result for $(A_{d+2})_{d\ge 1}$. In Section \ref{sec:consequences}, as consequences of the upper bound proposed by Stanley and Zanello, we obtain the strongly Cesàro convergence of $(O_d/O_{d-1})_{d\geq 2}$ to $1$ and hence the negative answer to the question of Roberts. Section~\ref{sec:calibration} presents the calibration procedure, the numerical evidence up to $D=1100$ and some prediction estimates derived from this analysis.

\section{Preliminaries}
\label{sec:prelim}

Let $R:=K[x_1,\dots,x_p]$ be the polynomial ring over a field $K$ with the variables ordered as $x_1<\dots<x_p$. A term of $R$ is a power product $x^\alpha=x_1^{\alpha_1}\dots x_p^{\alpha_p}$, with $\alpha_i\in \mathbb Z_{\geq 0}$ for every $i\in\{1,\dots,p\}$. 

Given two positive integers $a$ and $t$, the {\em binomial expansion of $a$ in base $t$} is the unique writing
$$a  :=  \binom{k(t)}{t} + \binom{k(t-1)}{t-1} + \dots + \binom{k(j)}{j},
$$
where $k(t)> k(t-1)>\dots > k(j)\geq j\geq 1$, and with the convention that a binomial coefficient $\binom{n}{m}$ is null whenever either $n<m$ or $m<0$ 
and $\binom{n}{0}=1$, for all $n\geq 0$. Let 
$$
a^{\langle t\rangle}:=\binom{k(t)+1}{t+1} + \binom{k(t-1)+1}{t} + \dots + \binom{k(j)+1}{j+1}.
$$

A numerical function $h=(h_0,h_1,\dots,h_t,h_{t+1},\dots)$ is an {\em $O$-sequence} if it is the Hilbert function $H_{R/I}: t\in \mathbb Z_{\geq 0}\to \dim(R_t/I_t) \in \mathbb Z_{\geq 0}$ of a standard graded $K$-algebra $R/I$, for a homogeneous ideal $I$ (we also say that $H_{R/I}$ is the Hilbert function of $I$). Equivalently, $h=(h_0,h_1,\dots,h_t,h_{t+1},\dots)$ is an $O$-sequence if, and only if, $h_0=1$ and for every $t\geq 1$
\begin{equation}\label{eq:condition}
h_{t+1} \leq  h_t^{\langle t\rangle}.
\end{equation}
 
An {\em order ideal} $S$ is a set of terms closed under division.  The terms outside $S$ generate a monomial ideal $J$ for which $H_{R/J}(t)$ is the cardinality of $S_t$ because $S_t$ is a basis of the $K$-vector space $R_t/J_t$.

The ideal $J$ is a {\em lex-segment ideal} if, for every degree $t$ and for every term $\tau \in S_t$, all terms of degree $t$ lower than~$\tau$ with respect to the lexicographic term order belong to $S$. As independently studied in \cite{Ma} and \cite{H66}, the value \eqref{eq:condition} depends on the growth of the lex-segment ideals. In fact, for every $O$-sequence $h$, there is a unique lex-segment ideal $J$ with Hilbert function $h$ (the converse is straightforward).

In this paper we consider {\em finite} $O$-sequences $h=(h_0,h_1,\dots,h_s)$, where $s:=\max\{t : h_t\not=0\}$ is the {\em socle degree} of $h$ and $d:=\sum_{t=0}^s h_t$ is the {\em multiplicity or length} of $h$. Equivalently, we consider finite order ideals $M$ such that the corresponding monomial ideals $J$ are lex-segment ideals. 

From now on, for every positive integer d, we denote by $O_d$ the number of all finite $O$-sequences of multiplicity $d$.
Analogously, we denote by $A_d$ the number of all finite $O$-sequences of multiplicity~$d$ such that the last non-zero value is strictly larger than~$1$.

We recall that $O_d=O_{d-1}+A_d$, for every $d\geq 2$ (see \cite[Lemma 2.1]{CG}).

\section{Computation of $O_d$ by an iterative formula}
\label{sec:iterative formula}

In the paper \cite{CG} two different methods are proposed to compute $O_d$. The first is an elementary method based on an explicit computation of the $A_d$ $O$-sequences whose last non-zero value is strictly larger than~$1$ (see \cite[Section 2]{CG}).  The second method is based on an iterative formula (see \cite[Section 3]{CG}) on which here we focus. 


\subsection{Description of the iterative formula and sub-Fibonacci behavior of $A_d$}

In this section we refer to \cite{SL} and \cite{CG}. When necessary, we regard every finite $O$-sequence $h=(h_0,\dots,h_s)$ as extended by setting $h_i=0$, for all $i>s$.

For every integer $p>0$, $n\geq 0$, $k\geq 0$, $d>0$, $S(p,n,k,d)$ denotes the set of all order ideals of Artinian lex-segment ideals in at most $p$ variables, corresponding to finite $O$-sequences $h=(h_0,h_1,\dots,h_s)$ of multiplicity~$d$, satisfying the following conditions:

$\bullet$ the socle degree $s$ is at most $n$,

$\bullet$ $h_i=\binom{p-1+i}{i}$ for all $0\leq i\leq k$, and 

$\bullet$ $h_i<\binom{p-1+i}{i}$ for all $k<i\leq s$. 
\vskip 1mm
\noindent In other words, $S(p,n,k,d)$ consists of the order ideals whose Hilbert function coincides with that of the polynomial ring in $p$ variables up to degree $k$, and is strictly smaller afterwards. Let $O(p,n,k,d):=\vert S(p,n,k,d)\vert$ denote the cardinality of $S(p,n,k,d)$. 

For every $n\geq 0$, we have $O(0,n,0,1)=1$ and $O(0,n,k,d)=0$, for every $k>0$ or $d>1$. For $p<0$ or $n<0$ or $k<0$ or $d\leq 0$ we agree that $O(p,n,k,d)=0$.

\begin{lemma}\label{lemma:relazioni}
\
\begin{itemize}
\item[(i)] $O(p,n,k,d)\geq O(p,n,k,d-1)$, for every $p\geq 2$, $n\geq d-1$, $k\geq 1$, $d>2$.
\item[(ii)] $O(p,n,k,d) \geq O(p,n-1,k,d)$, for every $n$.
\item[(iii)] $O(p,n,d-1,d)=0$ for every $p>1$ and $d>1$. 
\item[(iv)] $O(p,n,k,d)=0$, for every $k>\min\{n,d-1\}$.
\item[(v)] For every $p$, $k$, $d$, if $n\geq d-1$, then $O(p,n,k,d)=O(p,d-1,k,d)$.
\end{itemize}
\end{lemma}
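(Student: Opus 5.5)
The five items are elementary consequences of the definition of $S(p,n,k,d)$ and of the fact that a finite $O$-sequence is determined by, and determines, a unique lex order ideal. So I would argue directly on Hilbert functions $h=(h_0,\dots,h_s)$ and prove the easy items (ii)–(v) first, leaving the injection in (i) for last.

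For (ii), the conditions defining $S(p,n-1,k,d)$ differ from those of $S(p,n,k,d)$ only in requiring $s\le n-1$ instead of $s\le n$. Hence $S(p,n-1,k,d)\subseteq S(p,n,k,d)$ and the inequality follows.

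For (iv), membership in $S(p,n,k,d)$ forces $h_i=\binom{p-1+i}{i}\ge 1$ for all $0\le i\le k$. In particular $h_k\neq 0$, so $k\le s\le n$. Moreover $d\ge \sum_{i=0}^k h_i\ge k+1$, so $k\le d-1$.

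For (iii), take $k=d-1$ and $p>1$. Then $d\ge \sum_{i=0}^{d-1}\binom{p-1+i}{i}$, and this sum is strictly larger than $d$ because $h_1=p\ge 2$ (the case $d>1$ guarantees that index $1$ occurs). So the set is empty.

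For (v), any finite $O$-sequence of multiplicity $d$ has $h_0,\dots,h_s\ge 1$, so $s+1\le d$, that is $s\le d-1$. Thus the bound $s\le n$ is automatic once $n\ge d-1$, and the two sets $S(p,n,k,d)$ and $S(p,d-1,k,d)$ coincide.

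For (i) I would build an injection $S(p,n,k,d-1)\to S(p,n,k,d)$. Given $h$ of multiplicity $d-1$ with socle degree $s$, there are two cases.

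\begin{itemize}
\item If $s\le k$, I would consider appending $h_{s+1}=1$. Since $n\ge d-1>s$, the new socle degree $s+1$ is still at most $n$. I then need $1<\binom{p+s}{s+1}$, which holds for $p\ge 2$, and I need the Macaulay condition $1\le h_s^{\langle s\rangle}$, which is clear.
\item If $s>k$, I would append $1$ in degree $s+1$. This is allowed by Macaulay since $h_s\ge1$, and the strict inequality holds for $p\ge2$.
\end{itemize}

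The case $s=k$ needs care, since then $h_s=\binom{p-1+s}{s}$ and all later values must be strictly smaller than the full values. Appending $1$ still works, because $1<\binom{p+s}{s+1}$.

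Since $n\ge d-1>s$, the new $h$ has socle degree at most $n$. The map is injective because $h$ is recovered by deleting the final $1$. I also need the image to be a genuine order ideal of a lex ideal. This follows from the correspondence between $O$-sequences and lex order ideals recalled in Section~\ref{sec:prelim}.

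The main obstacle I expect is checking the hypotheses at the boundary: $s+1\le n$ and the strictness in degree $s+1$. I believe the hypotheses $n\ge d-1$, $p\ge2$ and $d>2$ are exactly what is needed there.
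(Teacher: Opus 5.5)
Your proof is correct and follows essentially the same route as the paper. For (i) you append a $1$ in degree $s+1$, with $p\ge 2$ keeping $k$ unchanged and $n\ge d-1>s$ securing the socle bound; (ii)--(iv) come directly from the definition, and (v) from $s\le d-1$. Your case split in (i) is superfluous, since membership in $S(p,n,k,d-1)$ already forces $s\ge k$, and the same appending argument works uniformly.
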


\begin{proof}
For item (i), if $h=(h_0,h_1,\dots,h_s)$ is the Hilbert function of an order ideal contained in $S(p,n,k,d-1)$, then $h'=(h_0,h_1,\dots,h_s,1)$ is contained in $S(p,n,k,d)$. Indeed, if $p\geq 2$, then the integer $k$ does not change in $h'$ and the bound $n\geq d-1$ is large enough for $h'$ too.

Items (ii)-(iv) follow straightforwardly by definition.
 
For item (v) it is enough to observe that every finite $O$-sequence of multiplicity $d$ has socle degree $s$ strictly lower than $d$. 
\end{proof}

\begin{lemma}\cite[Lemma 4.2]{CG}\label{lemma: base}
\
\begin{enumerate}
\item[(i)] $O_d=O(d,d-1,0,d)$.
\item[(ii)] $O(1,n,k,d)= \left\{\begin{array}{cl}1, &\text{ if } k=d-1 \text{ and } n\geq d-1\\ 0, &\text{ otherwise}\end{array}\right.$.
\item[(iii)] $O(p,n,0,d)=\sum_{k=0}^{d-1} O(p-1,n,k,d)$, for every $p\geq 2$.
\end{enumerate}
\end{lemma}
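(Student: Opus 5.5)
Lemma~\ref{lemma: base} is quoted from \cite[Lemma 4.2]{CG}, but I would still prove its three items directly from the definition of $S(p,n,k,d)$ and the uniqueness of the lex-segment ideal realizing a given $O$-sequence. Throughout, I identify an element of $S(p,n,k,d)$ with its Hilbert function $h$, since the lex order ideal is determined by $h$.

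\emph{Item (i).} An $O$-sequence of multiplicity $d$ has $h_1\le d-1$, so it is realized by a lex-segment ideal in at most $d$ variables. Its socle degree satisfies $s\le d-1$. Since $h_0=1=\binom{d-1}{0}$, the condition ``equal up to degree $k$'' holds for $k=0$. I need strict inequality afterwards: $h_i<\binom{d-1+i}{i}$ for $1\le i\le s$. This holds because $h_1\le d-1<d$, and the sum constraint $\sum h_i=d$ forces $h_i\le d-1<\binom{d-1+i}{i}$ for all $i\ge1$. Conversely, every element of $S(d,d-1,0,d)$ is a finite $O$-sequence of multiplicity $d$. This gives a bijection, so $O_d=O(d,d-1,0,d)$.

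\emph{Item (ii).} In one variable, $\binom{i}{i}=1$, so an order ideal is $\{1,x,\dots,x^{s}\}$ with $h_i=1$ for $i\le s$. Then $h_i<1$ for $k<i\le s$ is impossible unless $k=s$. Multiplicity $d$ forces $s=d-1$, which requires $n\ge d-1$. So there is exactly one such order ideal when $k=d-1$ and $n\ge d-1$, and none otherwise.

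\emph{Item (iii).} This is the Linusson-type decomposition and is the substantive step. Take $M\in S(p,n,0,d)$ with $p\ge2$. Since $h_1<p$, the largest variable $x_p$ does not lie in $M$. The lex-segment property gives $M_1\subseteq\{x_1,\dots,x_{p-1}\}$, and hence $M$ involves only $x_1,\dots,x_{p-1}$. It is therefore an order ideal of a lex-segment ideal in at most $p-1$ variables, with the same multiplicity and socle degree $\le n$.

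Let $k$ be the largest index with $h_i=\binom{p-2+i}{i}$ for all $i\le k$. Then $M\in S(p-1,n,k,d)$. The main check is that the condition ``strictly smaller after $k$'' really holds, i.e.\ that once $h_{i}<\binom{p-2+i}{i}$ for some $i$, it stays strictly smaller in all later degrees up to $s$. I would argue this via Macaulay's bound~\eqref{eq:condition}. If equality held again at some later degree $j$, then the lex order ideal would contain all monomials of degree $j$ in $p-1$ variables. Closure under division would then force it to contain all monomials of degree $i<j$, contradicting $h_i<\binom{p-2+i}{i}$.

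The map $M\mapsto M$ thus lands in the disjoint union $\bigsqcup_k S(p-1,n,k,d)$. It is bijective, because any element of $S(p-1,n,k,d)$, viewed in $p$ variables, has $h_1\le p-1<p$ and so lies in $S(p,n,0,d)$. The index $k$ ranges over $0\le k\le d-1$ by Lemma~\ref{lemma:relazioni}(iv). Summing over $k$ gives the formula.

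I expect the main obstacle to be this careful verification that the sets $S(p-1,n,k,d)$ partition $S(p,n,0,d)$ and that the lex property is preserved when passing between $p$ and $p-1$ variables. Items (i) and (ii) are routine by comparison.
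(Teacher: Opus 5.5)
Your proof is correct. The paper does not reprove this lemma but imports it from \cite[Lemma 4.2]{CG}, and your argument is the natural direct verification from the definition of $S(p,n,k,d)$: for (iii) you split $S(p,n,0,d)$ according to the largest $k$ with $h_i=\binom{p-2+i}{i}$ for all $i\le k$, and closure under division guarantees strict inequality in every later degree.

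The only step you state too quickly is the converse in (iii). Membership in $S(p,n,0,d)$ requires $h_i<\binom{p-1+i}{i}$ for every $1\le i\le s$, not just $i=1$. This is immediate, because $h_i\le\binom{p-2+i}{i}<\binom{p-1+i}{i}$ for $i\ge 1$. Likewise, the lex property survives the passage to $p$ variables because, in each degree, the monomials in $x_1,\dots,x_{p-1}$ form a lex-initial segment.
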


\begin{theorem}\cite[Theorem 4.3]{CG}\label{th: iterative formula}
For every integer $p>1$, $n\geq 0$, $k> 0$, $d>0$,
\begin{equation}\label{eq:formula k}
O(p,n,k,d)=\sum_{i=k}^n \sum_{j=1}^{d-1}  O(p-1,n,i,d-j)\cdot O(p,i-1,k-1,j).
\end{equation}
\end{theorem}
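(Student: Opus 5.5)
We prove \eqref{eq:formula k} by a bijection, using Linusson's decomposition of the order ideal of a lex-segment ideal with respect to the largest variable. Take $M\in S(p,n,k,d)$ with $k>0$, and write each term as $x_p^{a}\,\sigma$ with $\sigma$ in $K[x_1,\dots,x_{p-1}]$. Since $k\ge 1$, we have $h_1=p$, so $x_p\in M$. We split
\[
M=M'\sqcup x_pM'',
\]
where $M'$ is the set of terms of $M$ not divisible by $x_p$, and $M''=\{\tau : x_p\tau\in M\}$. Both sets are order ideals: $M'$ lives in $p-1$ variables and $M''$ in $p$ variables. Because the ideal is lex with $x_1<\dots<x_p$, the terms divisible by $x_p$ are the lex-largest in each degree. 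So in each degree $t$, $M$ is a terminal lex segment, $M''_{t-1}$ is the part divisible by $x_p$ (divided by $x_p$), and $M'_t$ is everything else. It follows that both $M'$ and $M''$ are again order ideals of lex-segment ideals.

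Next we read off the parameters of the two pieces.
\begin{itemize}
\item Set $j:=|M''|\ge 1$ (it contains $1$) and $|M'|=d-j$, so $1\le j\le d-1$.
\item Let $i$ be the largest degree in which $M'$ is full, i.e.\ $|M'_t|=\binom{p-2+t}{t}$ for $t\le i$ and smaller afterwards. Then $M'\in S(p-1,n,i,d-j)$, since its socle degree is at most that of $M$.
\item The key claim is that $M''$ has socle degree at most $i-1$, and that it is full in $p$ variables exactly up to degree $k-1$ and strictly smaller afterwards. Hence $M''\in S(p,i-1,k-1,j)$.
\item For the fullness of $M''$: $M_t$ is full if and only if both $M'_t$ and $M''_{t-1}$ are full. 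Since $M'_t$ full implies $M''_{t-1}$ full by the lex property, fullness of $M$ up to $k$ translates into fullness of $M''$ up to $k-1$. It also forces $i\ge k$.
\item For the socle bound $s(M'')\le i-1$: if $x_p\tau\in M_{t}$ with $t>i$, then in degree $t$ the lex segment $M_t$ reaches into the $x_p$-divisible block. That block lies above all $x_p$-free terms, so all of $M'_t$ would be present, contradicting $t>i$.
\end{itemize}
Together with $i\le n$, this gives an injective map
\[
S(p,n,k,d)\to\bigsqcup_{i=k}^{n}\bigsqcup_{j=1}^{d-1} S(p-1,n,i,d-j)\times S(p,i-1,k-1,j).
\]

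For surjectivity, take a pair $(M',M'')$ with these parameters and set $M=M'\cup x_pM''$. We must check three things. First, $M$ is closed under division; this is where the lex property of $M''$ and the fullness of $M'$ up to degree $i\ge \deg M''+1$ are used. Dividing $x_p\tau$ by $x_p$ lands in $M''\subseteq$ the full part of $M'$ plus lower lex terms. Dividing by another variable stays in $x_pM''$. Second, $M$ is a lex segment in each degree: in degrees $\le i$ it is the full $x_p$-free block together with a terminal lex segment of the $x_p$-block, and in degrees $>i$ it is a terminal lex segment of the $x_p$-free terms alone. Third, $M$ has the right parameters $k$ and $d$, and socle degree $\le n$.

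The main obstacle is the first of these, closure under division, which needs exactly the fullness of $M'$ in degrees $\le i$. I would first verify it carefully using that the lex-segment order ideal in degree $t$ is determined by its Macaulay representation, reducing to the statement that the "shadow" of a terminal segment in the $x_p$-block lies in the $x_p$-block of degree $t-1$ union the full $x_p$-free part. Summing the cardinalities over $i$ and $j$ then yields \eqref{eq:formula k}.
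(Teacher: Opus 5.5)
Your bijection $M\mapsto(M',M'')$, with $M'$ the $x_p$-free part and $M''=M:x_p$, is exactly the Linusson decomposition on which the cited proof in \cite{CG} rests. Your parameter bookkeeping ($1\le j\le d-1$, $k\le i\le n$, $s(M'')\le i-1$) is correct, so the proof goes through, but two steps need repair.

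\textbf{The reversed implication.} The sentence ``$M'_t$ full implies $M''_{t-1}$ full by the lex property'' is false. For $p=2$, the lex order ideal $M=\{1,x_1,x_2,x_1^2\}$ has $M'_2=\{x_1^2\}$ full but $M''_1=\emptyset$. The correct implication is the one you actually use for the socle bound: $M''_{t-1}\neq\emptyset$ forces $M'_t$ to be full, because $x_p$-divisible terms are lex-larger than all $x_p$-free ones. Hence $M_t$ is full if and only if $M''_{t-1}$ is full. This gives fullness of $M''$ up to $k-1$. It also gives the strict inequality for $k-1<t-1\le s(M'')$, which you assert but never derive: in that range $k<t\le s(M)$, so $M_t$, and therefore $M''_{t-1}$, is not full.

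\textbf{Closure under division.} In the surjectivity step this is easier than you expect and needs no Macaulay representations. The phrase ``$M''\subseteq$ the full part of $M'$ plus lower lex terms'' is not meaningful as written, since $M''$ lives in $p$ variables. Replace it with the following. Dividing $x_p\tau$ by $x_p$ gives $\tau$, of degree at most $s(M'')\le i-1$.
\begin{itemize}
\item If $x_p\mid\tau$, then $\tau\in x_pM''$, because $M''$ is an order ideal.
\item If $\tau$ is $x_p$-free, then $\tau\in M'$, because $M'$ is full in all degrees $\le i$.
\end{itemize}
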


It is easy to note that $O_1=O_2=1$ because there are only the $O$-sequence $h=(1)$ of multiplicity~$1$ and the $O$-sequence $h'=(1,1)$ of multiplicity~$2$. The following result highlights some aspects of formula~\eqref{eq:formula k} applied to the case of $O_d$, for $d>2$. Recall that $A_d$ denotes the number of the $O$-sequences $(h_0,h_1,\dots,h_s)$ of multiplicity $d$ with $h_s\geq 2$, hence $A_1=A_2=0$.

Recall that a non-decreasing integer sequence $(x_k)_k$, for which $x_1=x_2=1$, is {\em sub-Fibonacci} if $x_k\leq x_{k-1}+x_{k-2}$ for all $k\geq 3$ (see \cite[page 262]{FR} in the case of finite integer sequences).
The sequence $(O_d)_d$ is {\em sub-Fibonacci} (see \cite[Proposition~2.3, Corollary~2.4]{CG}). We will show that $(A_{d+2})_{d\geq 1}$ is sub-Fibonacci too.

\begin{lemma}\label{lemma:Od}
With the above notation, for every $d>2$
\begin{equation}\label{eq:Od2} 
O_d=1 + \sum_{\ell=1}^{d-2}\sum_{k=1}^{d-2} O(d-\ell,d-1,k,d).
\end{equation}
\end{lemma}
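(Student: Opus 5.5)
Starting from Lemma~\ref{lemma: base}(i), $O_d=O(d,d-1,0,d)$, we apply Lemma~\ref{lemma: base}(iii) once, with $p=d\geq 3$, to get
\[
O_d=\sum_{k=0}^{d-1} O(d-1,d-1,k,d).
\]
The idea is to peel off, at each step, the $k=0$ term and re-expand it by Lemma~\ref{lemma: base}(iii) with one fewer variable, while the terms with $k\geq 1$ are kept. After $\ell$ steps we expect
\[
O_d=\sum_{m=1}^{\ell}\sum_{k=1}^{d-1} O(d-m,d-1,k,d)+O(d-\ell,d-1,0,d).
\]
We iterate down to $p=1$, i.e.\ $\ell=d-1$. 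At that point the remaining term is $O(1,d-1,0,d)$, which is $0$ by Lemma~\ref{lemma: base}(ii) since $0\neq d-1$. Each application of Lemma~\ref{lemma: base}(iii) requires $p\geq 2$, which holds because the number of variables runs from $d$ down to $2$. This gives
\[
O_d=\sum_{\ell=1}^{d-1}\sum_{k=1}^{d-1} O(d-\ell,d-1,k,d).
\]

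Next we clean up the ranges. For $\ell\le d-2$ the number of variables is $p=d-\ell\geq 2>1$, so $O(d-\ell,d-1,d-1,d)=0$ by Lemma~\ref{lemma:relazioni}(iii), and the index $k$ stops at $d-2$. For $\ell=d-1$ we have $p=1$, and Lemma~\ref{lemma: base}(ii) gives $O(1,d-1,k,d)=1$ exactly when $k=d-1$ (with $n=d-1\ge d-1$) and $0$ otherwise. So this row contributes exactly $1$; it is the sequence $(1,1,\dots,1)$. Collecting the terms yields \eqref{eq:Od2}.

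The main obstacle is bookkeeping rather than any real difficulty. We need to check that the second argument $n=d-1$ is preserved throughout, which it is, since Lemma~\ref{lemma: base}(iii) keeps $n$ fixed. We also need the induction on $\ell$ to be stated cleanly. I would phrase the argument as a short induction on $\ell$ for the displayed intermediate identity, then specialize to $\ell=d-1$.
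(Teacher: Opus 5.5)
Your proposal is correct and follows essentially the same route as the paper. You iterate Lemma~\ref{lemma: base}(iii) from $O(d,d-1,0,d)$ down to $p=2$, discard $O(1,d-1,0,d)=0$, evaluate the $p=1$ row as $1$ via Lemma~\ref{lemma: base}(ii), and drop the $k=d-1$ terms for $p\ge 2$ via Lemma~\ref{lemma:relazioni}(iii); your explicit induction on $\ell$ is just a cleaner way of writing the paper's ``repeated use'' of Lemma~\ref{lemma: base}(iii).
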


\begin{proof}
By repeated use of Lemma \ref{lemma: base} we obtain
$$O_d=O(d,d-1,0,d)= \sum_{k=0}^{d-1} O(d-1,d-1,k,d)=O(d-1,d-1,0,d)+\sum_{k=1}^{d-1} O(d-1,d-1,k,d)$$
$$=O(d-2,d-1,0,d)+\sum_{k=1}^{d-1} O(d-2,d-1,k,d)+\sum_{k=1}^{d-1} O(d-1,d-1,k,d)=$$
$$=O(1,d-1,0,d)+\sum_{\ell=1}^{d-1}\sum_{k=1}^{d-1} O(d-\ell,d-1,k,d).$$
Then, observing that $O(1,d-1,0,d)=0$ and extracting the case $\ell=d-1$, we have
$$O_d=\sum_{\ell=1}^{d-2}\sum_{k=1}^{d-1} O(d-\ell,d-1,k,d)+ \sum_{k=1}^{d-1} O(1,d-1,k,d),$$
and observing that $\sum_{k=1}^{d-1} O(1,d-1,k,d)=1$ and extracting the case $k=d-1$ from the first summation, we obtain
$$O_d=1+ \sum_{\ell=1}^{d-2}\sum_{k=1}^{d-2} O(d-\ell,d-1,k,d)+\sum_{\ell=1}^{d-2} O(d-\ell,d-1,d-1,d).$$
To conclude the proof it is now sufficient to note that $\sum_{\ell=1}^{d-2} O(d-\ell,d-1,d-1 ,d)=0$ by Lemma \ref{lemma:relazioni}(iii), because $d-\ell\geq 2$.
\end{proof}

\begin{lemma}\label{lemma: Od bis}
With the above notation, for every $d>1$
\begin{equation}\label{eq:Od3}
O_d=\sum_{\ell=1}^{d-1}\sum_{k=1}^{d-1} \sum_{i=k}^{d-1}  O(d-\ell,d,i,d)\cdot O(d-\ell+1,i-1,k-1,1).
\end{equation}
\end{lemma}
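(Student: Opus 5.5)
The plan is to collapse the triple sum in \eqref{eq:Od3} to a double sum by evaluating the second factor explicitly, and then to compare the result with Lemma~\ref{lemma:Od}.

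First I evaluate the factor $O(d-\ell+1,i-1,k-1,1)$. Its first argument is $d-\ell+1\geq 2>0$. An order ideal of multiplicity $1$ has Hilbert function $h=(1)$, with socle degree $0$ and $h_0=1=\binom{p-1}{0}$. Reading off the definition of $S(p,n,k,d)$, this set is nonempty exactly when:
\begin{itemize}
\item the socle degree $0$ is at most $n=i-1$, which means $i\geq 1$;
\item the parameter $k-1$ equals $0$, since $h_i$ must equal $\binom{p-1+i}{i}$ for all $i\le k-1$, and $h_1=0<\binom{p}{1}$.
\end{itemize}
Since the summation range already gives $i\geq k\geq 1$, I get $O(d-\ell+1,i-1,k-1,1)=1$ if $k=1$ and $0$ otherwise. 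The right-hand side of \eqref{eq:Od3} therefore reduces to
$$\sum_{\ell=1}^{d-1}\sum_{i=1}^{d-1} O(d-\ell,d,i,d).$$
Next I apply Lemma~\ref{lemma:relazioni}(v). Since $n=d\geq d-1$, each term equals $O(d-\ell,d-1,i,d)$.

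Now I split off the boundary terms.
\begin{itemize}
\item For $\ell=d-1$, Lemma~\ref{lemma: base}(ii) gives $\sum_{i=1}^{d-1}O(1,d-1,i,d)=1$, since only $i=d-1$ contributes.
\item For $1\leq \ell\leq d-2$ and $i=d-1$, we have $d-\ell\geq 2$. So Lemma~\ref{lemma:relazioni}(iii) gives $O(d-\ell,d-1,d-1,d)=0$.
\end{itemize}
What remains is
$$1+\sum_{\ell=1}^{d-2}\sum_{i=1}^{d-2}O(d-\ell,d-1,i,d),$$
which is $O_d$ by Lemma~\ref{lemma:Od} for every $d>2$.

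Lemma~\ref{lemma:Od} does not cover $d=2$, so I check that case directly. The sum reduces to the single term $O(1,2,1,2)$, which equals $1$ by Lemma~\ref{lemma: base}(ii), because $k=1=d-1$ and $n=2\geq 1$. This agrees with $O_2=1$.

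The only delicate point is the evaluation of $O(p,n,k,1)$ from the definition, in particular making sure that the conventions $h_i=0$ for $i>s$ and $k\leq s$ force $k-1=0$. Everything else is bookkeeping with the earlier lemmas.
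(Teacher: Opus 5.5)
Your proposal is correct and follows the paper's proof step by step. Both arguments collapse the sum using $O(d-\ell+1,i-1,k-1,1)=1$ exactly when $k=1$, pass to $n=d-1$ by Lemma~\ref{lemma:relazioni}(v), split off $\ell=d-1$ and $i=d-1$, and conclude with \eqref{eq:Od2}. Two of your details are small additions the paper leaves implicit: you derive the value of the second factor from the definition, and you check $d=2$ separately, which is needed because \eqref{eq:Od2} is only stated for $d>2$.
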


\begin{proof}
We have
$$\sum_{\ell=1}^{d-1}\sum_{k=1}^{d-1} \sum_{i=k}^{d-1}  O(d-\ell,d,i,d)\cdot O(d-\ell+1,i-1,k-1,1)$$
$$=\sum_{\ell=1}^{d-1} \sum_{i=1}^{d-1}  O(d-\ell,d,i,d)=\sum_{\ell=1}^{d-1} \sum_{i=1}^{d-1}  O(d-\ell,d-1,i,d)$$
because $O(d-\ell+1,i-1,k-1,1)=1$ if, and only if, $k=1$ and is zero otherwise; moreover we observe that $O(d-\ell,d,i,d)=O(d-\ell,d-1,i,d)$ by Lemma~\ref{lemma:relazioni}(v). Then
$$=\sum_{\ell=1}^{d-2} \sum_{i=1}^{d-1}  O(d-\ell,d-1,i,d)+ \sum_{i=1}^{d-1}  O(1,d-1,i,d)$$
$$=\sum_{\ell=1}^{d-2} \sum_{i=1}^{d-2}  O(d-\ell,d-1,i,d)+1 + \sum_{\ell=1}^{d-2} O(d-\ell,d-1,d-1,d)$$
$$= \sum_{\ell=1}^{d-2} \sum_{i=1}^{d-2}  O(d-\ell,d-1,i,d)+1=O_{d}$$
by item (iii) of Lemma \ref{lemma:relazioni}, since $d>1$, and by \eqref{eq:Od2}, replacing $i$ with $k$.
\end{proof}

\begin{proposition}\label{prop:Od-Ad}
With the above notation, for every $d>2$
\begin{equation}\label{eq:Od4} 
O_d=
1+\sum_{\ell=1}^{d-2}\sum_{k=1}^{d-2} \sum_{i=k}^{d-2} \sum_{j=1}^{d-2} O(d-\ell-1,d-1,i,d-j)\cdot O(d-\ell,i-1,k-1,j),
\end{equation}
\begin{equation}\label{eq:Ad}
A_d= 1+\sum_{\ell=1}^{d-2}\sum_{k=1}^{d-2} \sum_{i=k}^{d-2} \sum_{j=2}^{d-2} O(d-\ell-1,d-1,i,d-j)\cdot O(d-\ell,i-1,k-1,j).
\end{equation}
\end{proposition}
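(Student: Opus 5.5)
The plan is to derive \eqref{eq:Od4} by substituting the iterative formula of Theorem~\ref{th: iterative formula} into Lemma~\ref{lemma:Od}. Then \eqref{eq:Ad} will follow from $A_d=O_d-O_{d-1}$, once the $j=1$ part of \eqref{eq:Od4} is identified with $O_{d-1}$ via Lemma~\ref{lemma: Od bis}.

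\emph{Step 1: the formula for $O_d$.} Start from \eqref{eq:Od2}. For $1\le \ell\le d-2$ we have $p=d-\ell\ge 2$, and $k\ge 1$, so Theorem~\ref{th: iterative formula} applies with $n=d-1$:
$$O(d-\ell,d-1,k,d)=\sum_{i=k}^{d-1}\sum_{j=1}^{d-1} O(d-\ell-1,d-1,i,d-j)\cdot O(d-\ell,i-1,k-1,j).$$
It remains to cut both ranges down to $d-2$, and both cuts use Lemma~\ref{lemma:relazioni}(iv).
\begin{itemize}
\item For $j=d-1$, the first factor is $O(d-\ell-1,d-1,i,1)$. This vanishes since $i\ge k\ge 1>\min\{d-1,0\}=0$.
\item For $i=d-1$ and any $j\ge 1$, the first factor is $O(d-\ell-1,d-1,d-1,d-j)$. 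This vanishes since $d-1>d-j-1$.
\end{itemize}
Hence only $k\le i\le d-2$ and $1\le j\le d-2$ survive, and summing over $\ell$ and $k$ gives \eqref{eq:Od4}. The case $\ell=d-2$, where the first factor has $p-1=1$, needs no special care, because Theorem~\ref{th: iterative formula} only requires $p=d-\ell>1$.

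\emph{Step 2: the formula for $A_d$.} Recall $A_d=O_d-O_{d-1}$ from \cite[Lemma 2.1]{CG}. It therefore suffices to show that the $j=1$ part of the quadruple sum in \eqref{eq:Od4} equals $O_{d-1}$. That part is
$$\sum_{\ell=1}^{d-2}\sum_{k=1}^{d-2}\sum_{i=k}^{d-2} O(d-\ell-1,d-1,i,d-1)\cdot O(d-\ell,i-1,k-1,1).$$
Now apply Lemma~\ref{lemma: Od bis} with $d-1$ in place of $d$, which is legitimate since $d-1>1$. That lemma gives exactly
$$O_{d-1}=\sum_{\ell=1}^{d-2}\sum_{k=1}^{d-2}\sum_{i=k}^{d-2} O(d-1-\ell,d-1,i,d-1)\cdot O(d-\ell,i-1,k-1,1),$$
which is the same expression term by term, with identical ranges. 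Subtracting $O_{d-1}$ from \eqref{eq:Od4} leaves the constant $1$ plus the terms with $2\le j\le d-2$, which is \eqref{eq:Ad}.

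\emph{Main obstacle.} The only delicate point is the index bookkeeping: checking that the boundary terms $i=d-1$ and $j=d-1$ really vanish by Lemma~\ref{lemma:relazioni}(iv), and that the index ranges match exactly when Lemma~\ref{lemma: Od bis} is shifted from $d$ to $d-1$. In particular, in that lemma the second argument of the first factor is the new $d$, namely $d-1$, and this matches \eqref{eq:Od4}. Everything else is direct substitution.
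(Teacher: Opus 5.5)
Your proposal is correct and follows the paper's proof essentially step by step. You substitute Theorem~\ref{th: iterative formula} into \eqref{eq:Od2}, discard the boundary terms $i=d-1$ and $j=d-1$ via Lemma~\ref{lemma:relazioni}(iv), identify the $j=1$ slice with $O_{d-1}$ through Lemma~\ref{lemma: Od bis} at $d-1$, and subtract using $A_d=O_d-O_{d-1}$; your explicit check of why each boundary factor vanishes is simply a more detailed version of the paper's one-line remark.
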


\begin{proof}
For formula \eqref{eq:Od4}, it is enough to apply Theorem \ref{th: iterative formula} to every addend $O(d-\ell,d-1,k,d)$ in \eqref{eq:Od2}. Moreover, observe that the terms with $i=d-1$ or $j=d-1$ vanish by item (iv) of Lemma \ref{lemma:relazioni}, hence the upper limits in the sums may be replaced by $d-2$.

For formula \eqref{eq:Ad}, we start from \eqref{eq:Od4}. Recalling that $A_d=O_d-O_{d-1}$ by \cite[Lemma 2.1]{CG}, then we extract $O_{d-1}$ from \eqref{eq:Od4} observing that a factor in \eqref{eq:Od4} is a number of $O$-sequences of multiplicity $d-1$ if, and only if, $j=1$. Hence, we need to single out 
$$O(d-\ell-1,d-1,i,d-1).$$
So, we extract the case $j=1$ from the first summation of \eqref{eq:Od4} and keep it in $G(d)$,
$$O_d=1+\sum_{\ell=1}^{d-2}\sum_{k=1}^{d-2} \sum_{i=k}^{d-2} \sum_{j=2}^{d-2} O(d-\ell-1,d-1,i,d-j)\cdot O(d-\ell,i-1,k-1,j)+G(d)$$
where, by Lemma \ref{lemma: Od bis},
$$G(d)=\sum_{\ell=1}^{d-2}\sum_{k=1}^{d-2} \sum_{i=k}^{d-2}  O(d-\ell-1,d-1,i,d-1)\cdot O(d-\ell,i-1,k-1,1)=O_{d-1}.$$
Finally we obtain
$$O_d=1+\sum_{\ell=1}^{d-2}\sum_{k=1}^{d-2} \sum_{i=k}^{d-2} \sum_{j=2}^{d-2} O(d-\ell-1,d-1,i,d-j)\cdot O(d-\ell,i-1,k-1,j)+O_{d-1}.$$
Hence
$$A_d=1+\sum_{\ell=1}^{d-2}\sum_{k=1}^{d-2} \sum_{i=k}^{d-2} \sum_{j=2}^{d-2} O(d-\ell-1,d-1,i,d-j)\cdot O(d-\ell,i-1,k-1,j)$$
recalling that $O_d=O_{d-1}+A_d$ (see \cite[Lemma~2.1]{CG}).
\end{proof}

\begin{proposition}
$A_{d}\leq A_{d+1}$, for every $d>2$. In particular, the sequence $(A_{d+2})_{d\geq 1}$ is sub-Fibonacci.
\end{proposition}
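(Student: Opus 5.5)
The plan has two parts: the monotonicity $A_d\le A_{d+1}$, which I would get from formula \eqref{eq:Ad}, and the sub-Fibonacci inequality, which I would prove by a direct injection on $O$-sequences.

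\emph{Monotonicity.} Write $A_{d+1}$ by \eqref{eq:Ad} with $d+1$ in place of $d$:
\[
A_{d+1}=1+\sum_{\ell'=1}^{d-1}\sum_{k=1}^{d-1}\sum_{i=k}^{d-1}\sum_{j=2}^{d-1} O(d-\ell',d,i,d+1-j)\cdot O(d+1-\ell',i-1,k-1,j).
\]
Reindex by $\ell'=\ell+1$ with $1\le \ell\le d-2$, and discard the term $\ell'=1$ and the indices $k,i,j=d-1$, since all terms are nonnegative. The summand becomes
\[
O(d-\ell-1,d,i,d+1-j)\cdot O(d-\ell,i-1,k-1,j).
\]
The second factor is exactly the second factor of the matching summand of $A_d$ in \eqref{eq:Ad}. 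For the first factor I would use Lemma~\ref{lemma:relazioni}(ii) to pass from $n=d-1$ to $n=d$, and then Lemma~\ref{lemma:relazioni}(i) to get
\[
O(d-\ell-1,d,i,d+1-j)\ \ge\ O(d-\ell-1,d,i,d-j).
\]
The hypotheses of (i) are $p=d-\ell-1\ge 2$, $k\to i\ge 1$, $n=d\ge (d+1-j)-1$ and $d+1-j>2$. Together with the common constant $1$, this gives the inequality term by term, except in the cases where these hypotheses fail.

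\emph{Main obstacle.} The hard part will be the boundary terms where the hypotheses of (i) fail.
\begin{itemize}
\item If $\ell=d-2$, then $p=1$. By Lemma~\ref{lemma: base}(ii), $O(1,d-1,i,d-j)\neq 0$ only for $i=d-j-1$, while in $A_{d+1}$ the factor $O(1,d,i,d+1-j)\neq 0$ only for $i=d-j$.
\item If $d-j\le 2$, the multiplicity is too small for (i).
\end{itemize}
For the $p=1$ terms I would match each nonzero summand $(i,j)=(d-j-1,j)$ of $A_d$ with the summand $(i+1,j)$ of $A_{d+1}$. This requires comparing $O(2,d-j-2,k-1,j)$ with $O(2,d-j-1,k-1,j)$, which Lemma~\ref{lemma:relazioni}(ii) settles. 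The remaining small-multiplicity terms I would check by hand, or absorb into the extra nonnegative summands coming from $\ell'=1$. If this bookkeeping gets messy, my fallback is to check small $d$ directly and argue only for $d$ large.

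\emph{Sub-Fibonacci.} I would first compute $A_3=1$ (only $(1,2)$) and $A_4=1$ (only $(1,3)$), so $x_1=x_2=1$ for $x_d=A_{d+2}$. Monotonicity is the part above. For $A_{d+2}\le A_{d+1}+A_d$, take $h=(h_0,\dots,h_s)$ of multiplicity $d+2$ with $h_s\ge 2$.
\begin{itemize}
\item If $h_s\ge 3$, replace $h_s$ by $h_s-1$. Condition \eqref{eq:condition} is preserved, and the last value is still $\ge 2$, so this gives an element counted by $A_{d+1}$.
\item If $h_s=2$, delete the last entry, giving multiplicity $d$. Since $1^{\langle t\rangle}=1$, we must have $h_{s-1}\ge 2$, so the result is counted by $A_d$.
\end{itemize}
Both maps are injective: the first is undone by adding $1$ to the last entry, the second by appending $2$. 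Their images lie in different sets, so $A_{d+2}\le A_{d+1}+A_d$.
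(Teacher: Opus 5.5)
Your monotonicity argument is essentially the paper's. You reindex $\ell'=\ell+1$, drop the $\ell'=1$ and $k,i,j=d-1$ terms, and compare termwise via Lemma~\ref{lemma:relazioni}(i) for $\ell\le d-3$; using (ii) to adjust $n$ instead of the paper's (v) is equivalent. For $\ell=d-2$ you match $i=d-j-1$ with $i=d-j$ and apply Lemma~\ref{lemma:relazioni}(ii), as the paper does. Your ``small multiplicity'' worry and the fallback to large $d$ are unnecessary: (i) only requires the larger multiplicity $d+1-j>2$, which holds for every $j\le d-2$, so no leftover terms remain.

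For $A_{d+2}\le A_{d+1}+A_d$ the paper simply cites \cite[Lemma~2.2]{CG}, whereas your injection gives a correct self-contained proof. The only point to make explicit is that the step $h_{s-1}\ge 2$ needs $s\ge 2$. This holds because $h=(1,2)$, of multiplicity $3$, is the only case with $s=1$ and $h_s=2$.
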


\begin{proof}
We prove the inequality $A_{d+1}\geq A_d$ by isolating non-negative remainder terms in formula \eqref{eq:Ad}.
Extracting the case $\ell=1$ and putting it in~$R_1$ we obtain
$$A_{d+1}=1+\sum_{\ell=2}^{d-1}\sum_{k=1}^{d-1} \sum_{i=k}^{d-1} \sum_{j=2}^{d-1} O(d-\ell,d,i,d+1-j)\cdot O(d+1-\ell,i-1,k-1,j)+R_1.$$
Resetting $\ell=\ell-1$ and observing that $O(d-\ell,d,i,d+1-j)=O(d-\ell,d-1,i,d+1-j)$ by Lemma \ref{lemma:relazioni}(v), since $d-1\geq (d+1-j)-1=d-j$,
$$A_{d+1}=1+\sum_{\ell=1}^{d-2}\sum_{k=1}^{d-1} \sum_{i=k}^{d-1} \sum_{j=2}^{d-1} O(d-\ell-1,d-1,i,d+1-j)\cdot O(d-\ell,i-1,k-1,j)+R_1.$$
Observe that for $i=d-1$ the factors $O(d-\ell-1,d-1,i,d+1-j)$ are null and that for $k=d-1$ the factors $O(d-\ell,i-1,k-1,j)$ are null by item (iv) of Lemma \ref{lemma:relazioni} if $j<d-1$ and by item (iii) of Lemma \ref{lemma:relazioni} if $j=d-1$. Then, we extract the case $j=d-1$ and put it in $R_2$
$$A_{d+1}=1+\sum_{\ell=1}^{d-2}\sum_{k=1}^{d-2} \sum_{i=k}^{d-2} \sum_{j=2}^{d-2} O(d-\ell-1,d-1,i,d+1-j)\cdot O(d-\ell,i-1,k-1,j)+R_1+R_2.$$
We now observe that, for every $1\leq \ell<d-2$, $O(d-\ell-1,d-1,i,d+1-j)\geq O(d-\ell-1,d-1,i,d-j)$ by Lemma \ref{lemma:relazioni}(i) because $i>0$ and $d-\ell-1>1$. It remains to consider the case $\ell=d-2$.

For $\ell=d-2$, $O(d-\ell-1,d-1,i,d+1-j)=O(1,d-1,i,d+1-j)=1$ if $i=d-j$, and it is null otherwise. Analogously, $O(1,d-1,i,d-j)=1$ if $i=d-j-1$, and it is null otherwise. 

Nevertheless, for $\ell=d-2$, we have $O(2,d-j-1,k-1,j)\geq O(2,d-j-2,k-1,j)$, by Lemma \ref{lemma:relazioni}(ii). Thus, from the summation we extract, for $\ell=d-2$ (for which only $i=d-j$ gives a contribution),
$$\sum_{k=1}^{d-2} \sum_{i=k}^{d-2} \sum_{j=2}^{d-2} O(1,d-1,i,d+1-j)\cdot O(2,i-1,k-1,j)=\sum_{k=1}^{d-2} \sum_{j=2}^{d-2} O(2,d-j-1,k-1,j)$$
$$\geq \sum_{k=1}^{d-2} \sum_{j=2}^{d-2} O(2,d-j-2,k-1,j)$$
where the inequality follows from the argument above and the last summation appears in the writing \eqref{eq:Ad} of $A_d$ for $\ell =d-2$ (for which only $i=d-j-1$ gives a contribution). Finally
$$A_{d+1}\geq 1+\sum_{\ell=1}^{d-2}\sum_{k=1}^{d-2} \sum_{i=k}^{d-2} \sum_{j=2}^{d-2} O(d-\ell-1,d-1,i,d-j)\cdot O(d-\ell,i-1,k-1,j)= A_d.$$
To prove the last assertion, set $x_n:=A_{n+2}$ for every $n\geq 1$. Then $x_1=A_3=1$ and $x_2=A_4=1$. Moreover, for the first part of the statement, $(x_n)_n$ is non-decreasing and $x_n=A_{n+2} \leq A_{n+1}+A_n=x_{n-1}+x_{n-2}$, for every $n>2$ (see \cite[Lemma~2.2]{CG}). Hence, $(A_{d+2})_{d\geq 1}$ is a sub-Fibonacci sequence.
\end{proof}

\subsection{Algorithm and computational cost}

In this section we describe an algorithm (see Algorithm \ref{algorithm}) that implements the iterative formula~\eqref{eq:formula k}, combined with Lemma~\ref{lemma:relazioni} and Lemma~\ref{lemma: base}, in the particular case the formula gives the value of $O_d$ (see \cite[Algorithm 2]{CG} for an algorithm that describes a computation of formula \eqref{eq:formula k} in the general case). Hence, it computes the values of $O_d$ for all $1 \le d \le D$, for a prescribed bound $D$, and allowed us to compute $O_d$, for every $1\leq d\leq D=1100$ (a list of these values computed in Python and the implementation of Algorithm \ref{algorithm} that we used to make this computation are available at \url{https://github.com/FrancescaCioffi/OSequences1-1100}).

\begin{remark}
The availability of the values of $O_d$ up to $D=1100$ allows us to verify computationally that the sequence $(O_d/O_{d-1})_d$ is decreasing for every $12\leq d\leq 1100$. If $(O_d/O_{d-1})_d$ were decreasing for every $d\geq 12$, then it would be convergent and its limit would be $1$ as will be shown in Section~\ref{sec:consequences}.
\end{remark}

The quantities $O(p,n,k,d)$ are encoded by truncated generating functions and computed by increasing the parameter $p$ in the following way.
For $p \geq 1$ and $0 \leq k \leq n \leq D-1$, we define the truncated generating functions
\begin{equation}\label{eq:Fpnk}
  F_{p,n,k}(t)\ :=\ \sum_{d=1}^{D} O(p,n,k,d)\,t^d .
\end{equation}
By Lemma~\ref{lemma: base}(i), the value $O_d$ coincides with the coefficient of $t^d$ in $F_{d,d-1,0}(t)$.

The computation proceeds inductively for $p=1,\dots,D$, storing only the two consecutive layers corresponding to $p-1$ and $p$, which guarantees memory efficiency. After every step at $p=d$, the values $O_d$ are obtained from $F_{d,d-1,0}(t)$.

\medskip
\noindent\textbf{Initialization ($p=1$).}
By Lemma~\ref{lemma: base}(ii),
\[
O(1,n,k,d)=
\begin{cases}
1 &\text{if } k=d-1 \text{ and } n\ge d-1,\\
0 &\text{otherwise.}
\end{cases}
\]
Hence $F_{1,n,k}(t)=t^{k+1}$ whenever $k+1\le D$ and $n\ge k$, and $F_{1,n,k}(t)=0$ otherwise.

\medskip
\noindent\textbf{Inductive step ($p\ge 2$).}
Assume that all $F_{p-1,n,k}(t)$ have been computed. 

For $k=0$, Lemma~\ref{lemma: base}(iii) gives
\begin{equation}\label{eq: F0}
F_{p,n,0}(t)
=
\sum_{k=0}^{D-1} F_{p-1,n,k}(t),
\end{equation}
where $ F_{p-1,n,k}(t)=0$ if $k>n$.

For $k>0$, Theorem~\ref{th: iterative formula} yields
\[
F_{p,n,k}(t)
=
\sum_{i=k}^{n}
\mathrm{PolyMulTrunc}
\bigl(
F_{p-1,n,i}(t),
F_{p,i-1,k-1}(t),
D
\bigr),
\]
where $\text{PolyMulTrunc}(a, b,D)$ is a procedure that computes $F_{p,n,k}(t)$ by a convolution algorithm. More precisely, given two polynomials $a$ and $b$, for each coefficient $a_i$ of the first polynomial, the algorithm checks whether $a_i\neq 0$. If so, it multiplies $a_i$ by all coefficients $b_j$ of the second polynomial such that $i + j \leq D$, again skipping the terms where $b_j = 0$. Each product $a_i\cdot b_j$ is then added to the coefficient of degree $i + j$ in the result. Zero-coefficients are skipped to speed up. The correctness of the algorithm follows directly from Lemma~\ref{lemma:relazioni}, Lemma~\ref{lemma: base}, and Theorem~\ref{th: iterative formula}. 

Summarizing, for each $p=1,\dots,D$ and $n=0,\dots,D-1$, the algorithm computes first $F_{p,n,0}(t)$, then $F_{p,n,k}(t)$ for $k=1,\dots,n$. 
In the implementation only the truncated generating functions corresponding to $p-1$ and $p$ are stored in two arrays ensuring memory efficiency
$$\mathtt{prev}[n][k]:=F_{p-1,n,k}(t),  \quad \mathtt{curr}[n][k]:=F_{p,n,k}(t),$$
which are updated at the end of each iteration on $p$. In the pseudo-code that we exhibit we omit the use of $\mathtt{curr}$ and $\mathtt{prev}$ for simplicity.

\begin{algorithm}[!ht]
	\caption{\label{algorithm} Iterative computation of $O_d$ via truncated generating functions, for $1\leq d\leq D$}
	\begin{algorithmic}[1]
		\State IterativeFormula$\left(D\right)$
		\Require a positive integer $D$
		\Ensure the values $O_d$, for every $1\leq d\leq D$ 
		\For{$0\leq k\leq n\leq D-1$}
		{
		\If{$k+1\leq D$ and $n\geq k$}
		\State $F_{1,n,k}(t)\gets t^{k+1}$
        \Else
		\State $F_{1,n,k}(t)\gets 0$
        \EndIf}
        \EndFor
\State $O_1:=1$
\For{$p = 2$ \textbf{to} $D$}
\For{$n = 0$ \textbf{to} $D-1$}
\State       {\textbf{Case $k=0$:} 
\State           { {$F_{p,n,0}(t) \gets \sum_{h = 0}^{D-1} F_{p-1,n,h}(t)$}} }
\State      {\textbf{Case $k>0$:} 
       \For{$k = 1$ \textbf{to} $n$} 
\State            $F_{p,n,k}(t) \gets \sum_{i=k}^{n} \text{PolyMulTrunc}(F_{p-1,n,i}(t), F_{p,i-1,k-1}(t), D)$
        \EndFor
        \EndFor
    \State $O_p \gets$ coefficient of $t^p$ in $F_{p,p-1,0}(t)$
\EndFor\\
\Return{$(O_1,\dots,O_D)$}
}	
\end{algorithmic}  
\end{algorithm}

\medskip
\noindent\textbf{Complexity.}
Let $M(D)$ denote the cost of multiplying two polynomials truncated at degree $D$. For each fixed $p$, there are $O(D^2)$ pairs $(n,k)$ with $0\le k\le n\le D-1$. For $k>0$, each update of $F_{p,n,k}(t)$ involves $O(D)$ truncated products, so one performs $O(D^3)$ calls to $\text{PolyMulTrunc}(a,b,D)$ per layer.
Thus the overall running time is $O\!\big(D^4\,M(D)\big)$; with classical convolution we have $M(D)=O(D^2)$, which gives the worst-case bound $O(D^6)$, as number of arithmetic operations on integer coefficients.
Memory usage consists of storing two layers, i.e.\ $O(D^2)$ truncated polynomials of length $D+1$, for a total of $O(D^3)$ integer coefficients. 

\medskip
\noindent\textbf{Comparison with other algorithms.} Different computations of the integers $O_d$ are proposed in the papers \cite{ES} and \cite{CG}. In \cite{ES}, the first $20$ values of $O_d$ are computed based only on the Macaulay condition~\eqref{eq:condition} (also see \url{https://oeis.org/A232476}). In \cite{CG}, two algorithms are proposed. The first algorithm (see \cite[Algorithm 1]{CG}) is based on the Macaulay condition \eqref{eq:condition}, but also includes several actions aimed at reducing the required memory and time (see \cite[Remark 3.2]{CG}), obtaining the first~$60$ values of~$O_d$ (again, see \url{https://oeis.org/A232476}). Nevertheless, the computational cost of an algorithm based on the Macaulay condition~\eqref{eq:condition} is governed by the rapid growth of the number of finite $O$-sequences of a given multiplicity. 
The second algorithm (see \cite[Algorithm 2]{CG}) consists of a quite straightforward application of the iterative formula \eqref{eq:formula k}, in its generality. 

Here, Algorithm \ref{algorithm} is based on the iterative formula \eqref{eq:formula k} too, but significantly improves the performance and focuses on the computation of $O_d$. The enhancements that we have described above in detail essentially consist of the introduction of truncated generating functions and of a convolution multiplication between vectors, with checks for zero factors. Although in the worst-case the computational cost of Algorithm~\ref{algorithm} is the same as \cite[Algorithm 2]{CG}, in practice the effective cost is smaller because the procedure systematically skips zero coefficients, thereby exploiting the sparsity that frequently occurs in the intermediate generating functions, so allowing the computation of $O_d$, for every $d\leq D=1100$.

\begin{remark}
Alternatively to the routine $\mathtt{PolyMulTrunc}(a,b,D)$  which performs the classical quadratic convolution algorithm, truncated at degree~$D$, a Fast Fourier Transform (FFT) technique could be employed. However, the choice to use the classical quadratic convolution algorithm guarantees exact results.
\end{remark}

\section{The Stanley-Zanello upper and lower bounds and consequences}
\label{sec:consequences}

In \cite{SZ} R. Stanley and F. Zanello give upper and lower bounds for $O_d$ of the form $C_1\sqrt{d} \leq \log(O_d)\leq C_2{\sqrt{d}} \log(d)$, where $\log$ denotes the natural logarithm and $C_1$, $C_2$ are constants.  
 
We explicitly write an expanded version of the bounds proposed in \cite[Theorem~1]{SZ} for~$\log(O_d)$ using the notation of~\cite{CG}. 
By the proof of \cite[Theorem 1]{SZ}, for every integer $d>2$ one has
\begin{equation}\label{eq:SZ_start}
p(d-1)\le O_d \le \sqrt{2d} \, p(d)\,e^{\sqrt{2d}\log (d)},
\end{equation}
 where $p(d)$ denotes the integer partition function. By the classical Hardy--Ramanujan formula for the partition function
(see, e.g., \cite[Formula (1.41)]{HR}),
\begin{equation}\label{eq:HR}
p(m)\sim \frac{1}{4m\sqrt3}\, \exp \left(\pi\sqrt{\frac{2m}{3}}\right)
\qquad (m\to +\infty),
\end{equation}
where the symbol $\sim$ means that $p(m)=[\frac{1}{4m\sqrt3}\, \exp \left(\pi\sqrt{\frac{2m}{3}}\right)](1+o(1))$\footnote{For a function $f(x)$, here $f(x)=o(1)$ means that $\lim_{x\to +\infty}f(x)=0$.}. Therefore,
\begin{equation}\label{eq:logHR}
\log (p(m))=\pi\sqrt{\frac{2m}{3}}-\log(4m\sqrt3)+o(1)
\qquad (m\to +\infty).
\end{equation}
Combining \eqref{eq:SZ_start} and \eqref{eq:logHR}, as $d\to +\infty$, we obtain
\[
\log(O_d)\ge \log p(d-1)
= \pi\sqrt{\frac{2(d-1)}{3}}
 -\log\!\bigl(4(d-1)\sqrt3\bigr)+o(1), \quad \text{and}
\]
\[
\log(O_d)\le \frac{1}{2}\log(2d)+\log p(d)+\sqrt{2d}\log (d) \ 
= \ \sqrt{2d}\log (d)+\pi\sqrt{\frac{2d}{3}}
-\frac12\log(24d)+o(1).
\]
Hence, as $d\to +\infty$,
\begin{equation}\label{eq:bound completo}
\pi\sqrt{\frac{2(d-1)}{3}}
-\log\!\bigl(4(d-1)\sqrt3\bigr)+o(1)
\le \log(O_d)\le
\sqrt{2d}\log (d)+\pi\sqrt{\frac{2d}{3}}
-\frac12\log(24d)+o(1).
\end{equation}

\begin{remark}\label{rem:storico}
The study of the partition function has a long history and raised the attention of many important mathematicians. Here we would only like to recall that, after the generating function of $p(m)$ due to Euler, the study of partitions gradually moved from exact and recursive descriptions to asymptotic methods. A decisive step in this direction was the Hardy-Ramanujan formula, which revealed the leading exponential growth of $p(m)$, and was later complemented by Rademacher’s exact convergent series. 
\end{remark}

\begin{remark}\label{rem:tecnico}
The key point of the upper bound of Stanley and Zanello is that the behaviour of the $O$-sequence $h=(h_0,\dots,h_s)$ is truly \emph{critical} only in a certain range of degrees. More precisely, if $d$ is the multiplicity of $h$ and $j$ denotes the minimal index such that $h_j\leq j$, then the current proof of the upper bound in \cite{SZ} shows that $j\leq \sqrt{2d}$, allowing $d$ independent choices only in each degree up to $j$. In \cite[Remark~2]{SZ} some finer considerations are presented about the critical range of degrees to be taken.
\end{remark}

The asymptotic upper bound given in \eqref{eq:bound completo} has the following direct elementary consequences on the behaviour of the rate sequence $(O_d/O_{d-1})_{d\geq 2}$, even in terms of statistical convergence and strong Cesàro convergence. 

\begin{lemma}\label{lemma: conseguenza 1}
With the above notation, $\displaystyle{\lim_{d\to +\infty}}\frac{\log(O_d)}{d}=0$. In particular, $\displaystyle{\lim_{d\to +\infty}}({O_d}^{1/d})=~1$.
\end{lemma}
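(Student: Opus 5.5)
The plan is to squeeze $\log(O_d)/d$ between $0$ and a quantity that tends to $0$, using the upper bound in \eqref{eq:bound completo}, and then exponentiate.

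First, the lower side. Since $O_d\geq 1$ for every $d\geq 1$ (the sequence $(1,1,\dots,1)$ of length $d$ is always a finite $O$-sequence of multiplicity $d$), we have $\log(O_d)\geq 0$, hence $\log(O_d)/d\geq 0$.

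Next, the upper side. By \eqref{eq:bound completo} there is a function $\varepsilon(d)=o(1)$ such that, for all sufficiently large $d$,
\[
0\le \frac{\log(O_d)}{d}\le \frac{\sqrt{2}\log(d)}{\sqrt d}+\frac{\pi\sqrt{2/3}}{\sqrt d}-\frac{\log(24d)}{2d}+\frac{\varepsilon(d)}{d}.
\]
Each term on the right tends to $0$: $\log(d)/\sqrt d\to 0$ by the standard comparison of logarithmic and power growth (for instance via l'H\^opital), $1/\sqrt d\to 0$, $\log(24d)/d\to 0$, and $\varepsilon(d)/d\to 0$. Alternatively, one can avoid \eqref{eq:HR} entirely by using the cruder elementary estimate $p(d)\le 2^{d-1}$ together with \eqref{eq:SZ_start}; but this only gives $\limsup \log(O_d)/d\le \log 2$, which is not enough. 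So the Hardy--Ramanujan asymptotics, as already packaged in \eqref{eq:bound completo}, is genuinely needed. The squeeze theorem then gives $\lim_{d\to+\infty}\log(O_d)/d=0$.

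Finally, $O_d^{1/d}=\exp(\log(O_d)/d)$, and continuity of $\exp$ at $0$ gives $\lim_{d\to+\infty}O_d^{1/d}=e^0=1$.

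There is no real obstacle. The only point needing care is to interpret the $o(1)$ term in \eqref{eq:bound completo} correctly as a bounded (indeed vanishing) additive error, so that dividing by $d$ kills it.
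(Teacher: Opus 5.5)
Your proof is correct and is essentially the paper's argument. You get $\log(O_d)\ge 0$ from $O_d\ge 1$, divide the upper bound in \eqref{eq:bound completo} by $d$ to get something tending to $0$ (the paper just packages this as $\log(O_d)\le C\sqrt{d}\log d$ for $d\ge d_1$), apply the squeeze theorem, and exponentiate. One minor quibble: your remark that Hardy--Ramanujan is ``genuinely needed'' is overstated, since any subexponential bound on $p(d)$, e.g.\ the elementary $p(n)\le e^{\pi\sqrt{2n/3}}$, would serve equally well in \eqref{eq:SZ_start}.
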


\begin{proof}
By \eqref{eq:bound completo}, there exist a constant $C>0$ and an integer $d_1$ such that $\log(O_d)\le C\sqrt d\,\log d$, for every $d\ge d_1$.
Moreover, since $O_d\ge 1$ for every $d\ge 1$, we have $\log(O_d)\ge 0$. Hence, for every $d\ge d_1$,
\[
0\le \frac{\log(O_d)}{d}\le C\,\frac{\log d}{\sqrt d}.
\]
Since
\[
\frac{\log d}{\sqrt d}\longrightarrow 0
\qquad\text{as } d\to +\infty,
\]
the squeeze theorem yields the first assertion.
The second assertion follows straightforwardly from the first assertion, because $\log({O_d}^{\frac{1}{d}})=\frac{1}{d}\log(O_d)$.
\end{proof}



The notion of statistical convergence was introduced by H. Fast in \cite{Fast}; a later treatment is due to J. A. Fridy \cite{Fridy}, according to whom a sequence $x=(x_k)$ is {\em statistically convergent} to $L$ if, for every $\varepsilon>0$, the set $\{k\in \mathbb{N}: |x_k-L|>\varepsilon\}$ 
has natural density zero\footnote{Let $A(x)$ be the number of positive integers in a set $\mathcal A$ that are less than or equal to $x$. In case the sequence $A(n)/n$ has limit, we say that $\mathcal A$ has {\em natural density} $\delta(\mathcal A):=\displaystyle{lim_{n\to +\infty}}\frac{A(n)}{n}$.}. Following Connor \cite{Connor}, a sequence $x=(x_k)$ is said to be {\em strongly $p$-Cesàro convergent} to $L$, for $p>0$, if
$\displaystyle{\frac{1}{N}\sum_{k=1}^{N}|x_k-L|^p}$ converges to~$0$, for $N\to +\infty$.
In the case $p=1$, this is simply called {\em strong Cesàro convergence}.

\begin{proposition}\label{prop: Cesaro}
The sequence $(\frac{O_d}{O_{d-1}})_d$ is strongly Cesàro convergent to $1$; namely, for $N\to +\infty$,
$$
\frac{1}{N-1}\sum_{d=2}^N \left|\frac{O_d}{O_{d-1}}-1\right| \longrightarrow 0.$$ 
Equivalently, it is statistically convergent to $1$; namely, for $N\to +\infty$, for every $\varepsilon>0$,
$$\frac{1}{N-1} \#\left\{2\leq d\leq N: \left|\frac{O_d}{O_{d-1}}-1\right|\geq \varepsilon \right\} \longrightarrow 0.$$
\end{proposition}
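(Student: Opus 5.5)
Set $r_d:=O_d/O_{d-1}$ for $d\geq 2$. The plan rests on two facts: $r_d\geq 1$, and the logarithms of the $r_d$ telescope. Since $O_d=O_{d-1}+A_d\geq O_{d-1}$ (by \cite[Lemma 2.1]{CG}), every term $r_d-1$ is non-negative. Hence the absolute values in the statement can be dropped, and the claim reduces to showing
$$\frac{1}{N-1}\sum_{d=2}^N (r_d-1)\longrightarrow 0 .$$
The difficulty is that $\log(O_N)=\sum_{d=2}^N \log r_d$ controls $\log r_d$, not $r_d-1$ directly. These two quantities are comparable only when $r_d$ is bounded. So the first step is to use the bound $r_d\le \phi$ (the golden ratio) for $d\ge 6$, recalled in the Introduction from \cite{CG}; it also follows directly from the sub-Fibonacci property of $(O_d)$, which gives $O_d\le 2O_{d-1}$. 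Either bound is enough: $r_d\le 2$ for all $d\geq 2$.

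Next, on the interval $[1,2]$ the function $x\mapsto \log x$ is concave, with $\log 1=0$ and $\log 2$ at the right endpoint. So $\log x\ge (x-1)\log 2$ there, that is,
$$0\le r_d-1\le \frac{\log r_d}{\log 2}.$$
Summing over $d$ and telescoping, with $O_1=1$, yields
$$\frac{1}{N-1}\sum_{d=2}^N |r_d-1|\le \frac{1}{\log 2}\cdot\frac{\log(O_N)}{N-1}.$$
The right-hand side tends to $0$ by Lemma~\ref{lemma: conseguenza 1}, which proves strong Cesàro convergence.

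For the statistical convergence, a Markov-type count suffices:
$$\#\{2\le d\le N:\ |r_d-1|\ge\varepsilon\}\le \frac{1}{\varepsilon}\sum_{d=2}^N|r_d-1|,$$
so the density of this set tends to $0$. For the claimed equivalence, I would invoke Connor's classical result \cite{Connor}: for bounded sequences, strong Cesàro convergence and statistical convergence are equivalent. Our sequence is bounded by $2$. The converse direction can also be proved directly: split the sum into the terms with $|r_d-1|<\varepsilon$ and the remaining ones, bounding each of the latter by $1$.

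The main obstacle is the uniform upper bound on $r_d$. Without it, the telescoping of logarithms controls only $\log r_d$, and occasional large ratios could spoil the linearization. Taking the bound from the sub-Fibonacci property of $(O_d)$ resolves this.
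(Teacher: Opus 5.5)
Your proof is correct and follows the paper's argument. Both use $1\le O_d/O_{d-1}\le 2$, bound $r_d-1$ linearly by $\log r_d$ on $[1,2]$, telescope to $\log(O_N)/(N-1)$, apply Lemma~\ref{lemma: conseguenza 1}, and invoke Connor's theorem for the equivalence. The differences are cosmetic: you use the concavity chord bound (constant $1/\log 2$) where the paper uses $x-1\le x\log x\le 2\log x$ (constant $2$), and you add a direct Markov-type count for statistical convergence.
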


\begin{proof}
First, observe that for every $1\leq x\leq 2$, we have 
$$x-1 \leq x\log(x) \leq  2 \log(x).$$ 
Thanks to \cite[Proposition 2.3]{CG}, for every $d\geq 2$, 
$$1\leq \frac{O_d}{O_{d-1}}\leq 2.$$ 
Hence, we can replace $x$ by $\frac{O_d}{O_{d-1}}$ obtaining 
$$0 \leq \frac{O_d}{O_{d-1}}-1 \leq 2 \ \log\Bigl(\frac{O_d}{O_{d-1}}\Bigr).$$
We also have $O_N=O_1\prod_{d=2}^N \frac{O_d}{O_{d-1}}= \prod_{d=2}^N \frac{O_d}{O_{d-1}}$ because $O_1=1$. Then
$$0 \leq \frac{1}{N-1} \sum_{d=2}^N \Bigl(\frac{O_d}{O_{d-1}}-1\Bigr)  \leq 2 \ \frac{1}{N-1} \sum_{d=2}^N \log\Bigl(\frac{O_d}{O_{d-1}} \Bigr) = 2 \ \frac{\log(O_N)}{N-1}$$
and so the first assertion holds thanks to Lemma \ref{lemma: conseguenza 1}.

The equivalence with the statistical convergence follows from \cite[Theorem 2.1]{Connor} since the sequence $(O_d/O_{d-1})_{d\geq 2}$ is bounded, thanks to \cite[Proposition 3.6(i)]{CG}, as already highlighted. 
\end{proof}

Actually, we do not know if the sequence $(\frac{O_d}{O_{d-1}})_d$ converges in the usual sense. Nevertheless, the following result shows that, if the sequence $(\frac{O_d}{O_{d-1}})_{d\geq 2}$ converges, then $\lim_{d\to\infty} \frac{O_d}{O_{d-1}}=~1$. This gives a negative answer to a question posed by L.~G.~Roberts in~\cite{RR}, under the assumption of convergence. 

\begin{corollary}\label{cor:answer} {\rm[Answer to L. G. Roberts' question]} If the sequence $(\frac{O_d}{O_{d-1}})_{d\ge 2}$ converges, then its limit is equal to $1$.
\end{corollary}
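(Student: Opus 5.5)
The plan is to deduce the corollary directly from Proposition~\ref{prop: Cesaro}, using the elementary fact that Cesàro means preserve ordinary limits. Suppose that $x_d:=O_d/O_{d-1}$ converges to some $L$. By \cite[Proposition 2.3]{CG} we have $1\le x_d\le 2$ for every $d\ge 2$, so $1\le L\le 2$; it therefore suffices to rule out $L>1$.

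\emph{Route via strong Cesàro convergence.} Since $x_d\ge 1$, we have $|x_d-1|=x_d-1$. If $x_d\to L$, then $x_d-1\to L-1$, and the classical Cesàro lemma gives
\[
\frac{1}{N-1}\sum_{d=2}^N (x_d-1)\longrightarrow L-1 .
\]
Proposition~\ref{prop: Cesaro} says the same averages tend to $0$. Uniqueness of limits then yields $L=1$.

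\emph{Alternative route via Lemma~\ref{lemma: conseguenza 1}.} If $x_d\to L$, then $\log x_d\to\log L$. Since $O_1=1$, the Cesàro lemma gives
\[
\frac{\log O_N}{N-1}=\frac{1}{N-1}\sum_{d=2}^N\log x_d\longrightarrow \log L .
\]
The left-hand side tends to $0$ by Lemma~\ref{lemma: conseguenza 1}, so $\log L=0$ and $L=1$. This route is the same as the Cauchy root test fact: $\lim x_d=L$ implies $O_d^{1/d}\to L$.

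There is no real obstacle. The only point needing care is the Cesàro lemma itself: given $\varepsilon>0$, split the sum at an index $d_0$ beyond which $|x_d-L|<\varepsilon$. The finitely many initial terms contribute $O(1/N)$, and the remaining terms contribute at most $\varepsilon$. This is standard, and I would cite it rather than reprove it.
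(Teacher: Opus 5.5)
Your proposal is correct, and your first route is essentially the paper's argument: ordinary convergence to $L$ implies Ces\`aro convergence to $L$, and combining this with Proposition~\ref{prop: Cesaro} and uniqueness of the limit gives $L=1$ (you also make explicit, via $x_d\ge 1$, that the strong and ordinary Ces\`aro means of $x_d-1$ coincide). Your alternative route is also valid: it Ces\`aro-averages $\log x_d$ and invokes Lemma~\ref{lemma: conseguenza 1}, so it bypasses the proposition and uses the Stanley--Zanello bound directly.
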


\begin{proof}
Under the additional assumption that the sequence $(\frac{O_d}{O_{d-1}})_{d\ge 2}$ converges ordinarily to a limit $L$, then it is also strongly Ces\`{a}ro convergent to $L$. On the other hand, Proposition~\ref{prop: Cesaro} guarantees  that the sequence is strongly Ces\`{a}ro convergent to $1$. Then, the statement follows by uniqueness of the limit. 
\end{proof}

\begin{remark}
Since $\displaystyle \frac{1}{N-1}\sum_{d=2}^{N}\left(\frac{O_d}{O_{d-1}}-1\right) = \frac{1}{N-1}\sum_{d=2}^{N}\frac{O_d}{O_{d-1}}-1,$ 
Proposition \ref{prop: Cesaro} can be restated as 
\hspace{.2cm} $\displaystyle \frac{1}{N-1}\sum_{d=2}^{N}\frac{O_d}{O_{d-1}}\longrightarrow 1.$
Furthermore, since $\displaystyle \frac{O_d}{O_{d-1}}-1=\frac{A_d}{O_{d-1}}$ by \cite[Lemma~2.1]{CG}, the sequence $\displaystyle \left(\frac{O_d}{O_{d-1}}\right)_{d\ge2}$ converges ordinarily (respectively, is strongly Ces\`{a}ro convergent, and hence statistically convergent) to $1$, if and only if, the sequence $\displaystyle \left(\frac{A_d}{O_{d-1}}\right)_{d\ge2}$ converges ordinarily (respectively, is strongly Ces\`{a}ro convergent, and hence statistically convergent) to~$0$.    
\end{remark}


\section{Experimental estimates for the Stanley-Zanello upper and lower bounds in the interval $1\leq d\leq 1100$}
\label{sec:calibration}

The goal of this section is to produce empirical calibrations of the upper and lower bounds of Stanley and Zanello in the interval $1\leq d\leq 1100$.

First, we focus on an empirical calibration of the upper bound that preserves ${\sqrt{d}}\log(d)$ growth while staying closer to the observed values of $\log(O_d)$, in the finite range $1\leq d\leq 1100$. Then, we also observe that in the same interval the Stanley-Zanello lower bound for $\log(O_d)$ better fits the observed values of $\log(O_d)$ than the upper bound, according to the theoretical considerations presented in \cite[Remark 2]{SZ}.

All the computations reported in the following part of the manuscript were performed in~R. 

\medskip
\noindent{\bf Case D=1100: upper bound.}
For a positive integer $D$ and a discrete function $f(d)$, let $L_{f}(d) = \beta_{0,f} + \beta_{1,f} d$ be the linear function computed by the least-squares method
\footnote{We recall the formulas:  
	$\beta_{1,f}
		= \frac{\sum_{d=1}^D (d - \bar{d})(f(d) - \overline{f})}
		{\sum_{d=1}^D (d - \bar{d})^2}$, 
$\beta_{0,f} = \overline{f} - \beta_{1,f} \bar{d}$,
\ where $\bar{d} = \frac{(D+1)}{2}$, and $\overline{f} = \frac{1}{D}\sum_{d=1}^D f(d)$.} 
    that best fits the data points $(d,f(d))$ in the interval $1\leq d\leq D$. 
Then we consider the explicit part of the asymptotic upper bound in \eqref{eq:bound completo}, namely, 
	\begin{equation} \label{UPB}
	up(d):={\sqrt{2d}}\,\Bigl(\log(d)
	\;+\;
	\frac{\pi}{\sqrt{3}}\Bigr)
	\;-\;
	\frac{\log(24\,d)}{2}.
	\end{equation} 
From the relation
$$L_{\log(O_d)}=(L_{up(d)}-\beta_{0,up})\frac{\beta_{1,\log(O_d)}}{\beta_{1,up}}+\beta_{0,\log(O_d)}$$ 
we obtain the following discrete function
\begin{equation}\label{trasf}
		\widehat{up}(d):=(up(d)-\beta_{0,up})\frac{\beta_{1,\log(O_d)}}{\beta_{1,up}}+\beta_{0,\log(O_d)} \quad \text{ (affine rescaling)},
	\end{equation}
where the symbols $\beta_{0,up}$, $\beta_{1,up}$, $\beta_{0,\log(O_d)}$, $\beta_{1,\log(O_d)}$ are $\beta_{0,f}$ and $\beta_{1,f}$, with $f(d)=up(d)$ and $f(d)=\log(O_d)$, respectively.

The construction \eqref{trasf} of $\widehat{up}(d)$ does not exclude that there exists a positive integer~$d$ such that $\widehat{up}(d)-\log(O_d)<0$, namely $\widehat{up}(d)<\log(O_d)$ (see the second line of Table~\ref{tab:table2}). Hence, we also consider the function
		\begin{equation}\label{trasfbest}
		\widehat{\widehat{up}}(d):= \widehat{up}(d)+ \max_{d\leq D}\left(\log(O_d)-\widehat{up}(d)\right),
	\end{equation}
because by definition it satisfies $\log(O_d)\leq \widehat{\widehat{up}}(d)$, for every positive integer $d\leq D$. The function~\eqref{trasfbest} turns out to be an empirical calibration of the upper bound for $\log(O_d)$ that behaves better than the function $up(d)$ in the interval $1\leq d\leq D=1100$, as we show in the following.

{\em We highlight that both the functions $\widehat{up}(d)$ and $\widehat{\widehat{up}}(d)$ depend on the value of $D$, although we are choosing a notation that does not evidences it. When it will be necessary we make this dependence explicit even in the notation.}

As already observed, in the interval $1\leq d\leq D=1100$, there are some values $d$ such that $\widehat{up}(d)-\log(O_d)<0$ and so the function $\widehat{up}(d)$ is not an upper bound for $\log(O_d)$ even in the given interval. Nevertheless, it turns out to be a much better empirical approximation for the observed values of $\log(O_d)$ than $up(d)$ in the given interval, in the sense that 
	\begin{equation}\label{goodness}
		\max_{1\leq d\leq 1100}\vert\widehat{up}(d)-\log(O_d)\vert
		< \max_{1\leq d\leq 1100}\vert up(d)-\log(O_d)\vert,
	\end{equation}
as the data in Table~\ref{tab:table2} show.
Moreover, in the finite range $1\leq d\leq 1100$ one has
\[
\log(O_d)\le \widehat{\widehat{up}}(d) < up(d),
\]
where the first inequality follows from the definition of $\widehat{\widehat{up}}(d)$, while the second follows by a direct numerical check. Thus, in the same finite range $1\leq d\leq 1100$ 
	\begin{equation}\label{good}
		\max_{d\leq D} \left(\widehat{\widehat{up}}(d)-\log(O_d)\right)< \max_{d\leq D}({up(d)}-\log(O_d)).
	\end{equation}
Using \eqref{trasf} and \eqref{trasfbest}, we can write
\begin{equation}\label{linear_transUP}
\widehat{\widehat{up}}(d)= \left(\frac{\beta_{1,\log(O_d)}}{\beta_{1,up}}\right) up(d)+\left(\beta_{0,\log(O_d)}-\beta_{0,up}\frac{\beta_{1,\log(O_d)}}{\beta_{1,up}}+\max_{d\leq D}(\log(O_d)-\widehat{up}(d))\right),
\end{equation} in order to highlight the following relationship between $\widehat{\widehat{up}}(d)$ and $up(d)$
\begin{equation}\label{relUP}
 \widehat{\widehat{up}}(d)\equiv\widehat{\widehat{up}}(d,D):=F(D) \, up(d)+ G(D)   
\end{equation}
with $D=1100$ and 
$$F(D):=\frac{\beta_{1,\log(O_d)}}{\beta_{1,up}},\quad G(D):=\beta_{0,\log(O_d)}-\beta_{0,up}\frac{\beta_{1,\log(O_d)}}{\beta_{1,up}}+\max_{d\leq D}(\log(O_d)-\widehat{up}(d)),$$
where the dependence on $D$ is also in the calculation of the parameters $\beta_{0,up}$, $\beta_{1,up}$, $\beta_{0,\log(O_d)}$, $\beta_{1,\log(O_d)}.$
Then, using the numerical values in Tables \ref{tab:table1} and \ref{tab:table2}, we finally obtain the approximate expression
\begin{equation}\label{eq:calibration}
\widehat{\widehat{up}}(d)\approx 0.2256028352\ up(d)+4.574661581,
\end{equation}
which assumes values much closer to those of $\log(O_d)$ than $up(d)$ for $1\leq d\leq 1100$.
We stress that the identities in Table \ref{tab:table2} are exact for the function defined in \eqref{trasfbest}, from which 
$$
\widehat{\widehat{up}}(d)=(up(d)-\beta_{0,up})\frac{\beta_{1,\log(O_d)}}{\beta_{1,up}}+\beta_{0,\log(O_d)}+\max_{d\leq D}(\log(O_d)-\widehat{up}(d)),$$
with $\max_{d\leq D}(\log(O_d)-\widehat{up}(d))=-\min_{d\leq D}(\widehat{up}(d)-\log(O_d))=0.840217291747,$ while small discrepancies may appear if one recomputes it using the rounded coefficients in \eqref{eq:calibration}.

Formula \eqref{eq:calibration} naturally suggests that, on the finite range $1\leq d\leq 1100$, the effective coefficient in the upper bound from \eqref{eq:bound completo} may be much smaller than $\sqrt{2}$, provided that one inserts an additional additive term, namely $4.574661581$ as in \eqref{eq:calibration}.

In Figures \ref{step2} and \ref{step3bis} R-plots of the linear functions $L_{up(d)}$, $L_{\log(O_d)}$ and of the discrete functions $\log(O_d), up(d), \widehat{up}(d), \widehat{\widehat{up}}(d)$ are shown.

\begin{table}[htbp]
\centering
\caption{Coefficients of fitting lines of $\log(O_d)$ and $up(d)$ for the range $1\leq d\leq D=1100$ and slope of the empirical fit for $\log(O_d)$}
\label{tab:table1}
\begin{tabular}{l l}
\hline\\[-8pt]
$\beta_{0,\log(O_d)} = 19.12701407$ & $\beta_{1,\log(O_d)} = 0.07467593363$ \\[3pt]
\hline\\[-8pt]
$\beta_{0,up} = 68.22861859$ & $\beta_{1,up} = 0.3310061843$\\[3pt] \hline\\[-8pt]
$\beta_{1,\log(O_d)}/\beta_{1,up} = 0.2256028352$ & \\[3pt]
\hline
\end{tabular}
\end{table}

\begin{table}[htbp]
\centering
\caption{Discrepancies between $\log(O_d)$ and $up(d)$, $\widehat{up}(d)$, $\widehat{\widehat{up}}(d)$, for $1\leq d\leq D=1100$}
\label{tab:table2}
\begin{tabular}{l l l}
\hline\\[-8pt]
$\min_d(up(d) - \log(O_d))$ & $=$ &$ 0.9760727451$ \\[3pt]
\hline\\[-8pt]
$\min_d(\widehat{up}(d)-\log(O_d))$ &$=$ &$-0.840217291747$ \\[3pt]
\hline\\[-8pt]
$\min_d(\widehat{\widehat{up}}(d)-\log(O_d))$ & $=$  &$0$ \\[3pt]
\hline
\hline\\[-8pt]
$\max_d \vert up(d) - \log(O_d)\vert$      &$=$ & $313.681462325$ \\[3pt]
\hline\\[-8pt]
$\max_d \vert\widehat{up}(d)-\log(O_d)\vert$ &$=$ &$ 4.4289158466$ \\[3pt]
\hline\\[-8pt]
$\max_d\left(\widehat{\widehat{up}}(d)-\log(O_d)\right)$ &$=$ &$5.26913313834$ \\[3pt]
\hline
\end{tabular}
\end{table}

\begin{remark}
We also tested linear fits in the variable $x(d):={\sqrt{d}}\log(d)$, but this did not improve the quality of the approximation while making the graphical comparison less transparent, so we decided not to pursue this option. However, although a linear fit in $x(d)$ does not improve the approximation on our finite range, it yields compatible estimates for the effective constant in the $\sqrt{d}\log(d)$ scale.
\end{remark}


\begin{figure}[h]
	\centering
	\includegraphics[scale=0.49]{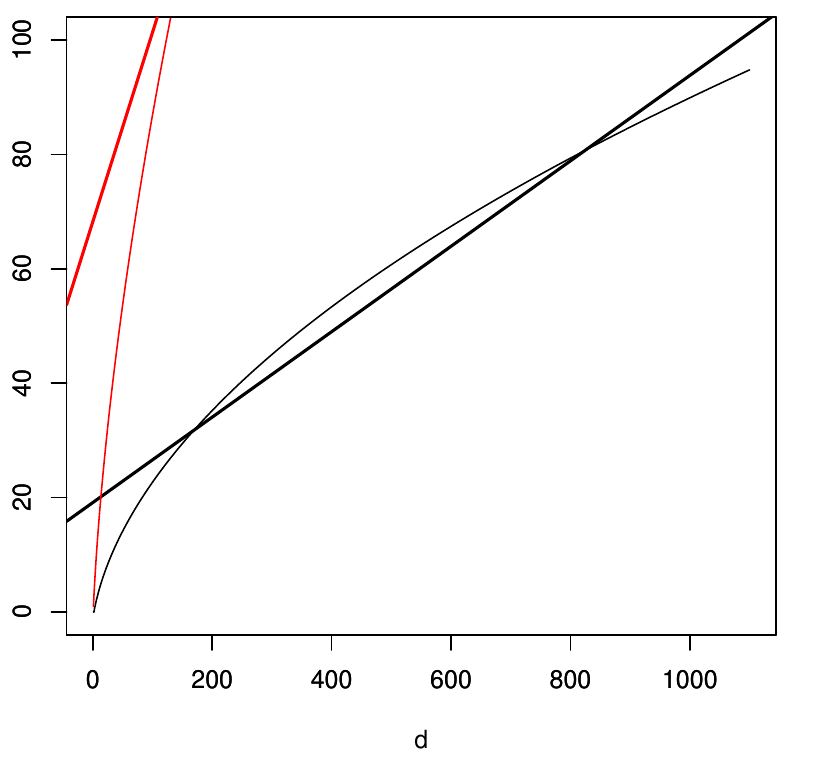}
	\caption{$1\leq d\leq D=1100$. In black, the plotted values $\log(O_d)$ by the curve and the plotted values of the function $L_{\log(O_d)}(d)$. In red, the values $up(d)$ and the corresponding values of the  function $L_{up}(d)$.}\label{step2}
\end{figure}
		

			\begin{figure}[h]
			\centering
			\includegraphics[scale=0.49]{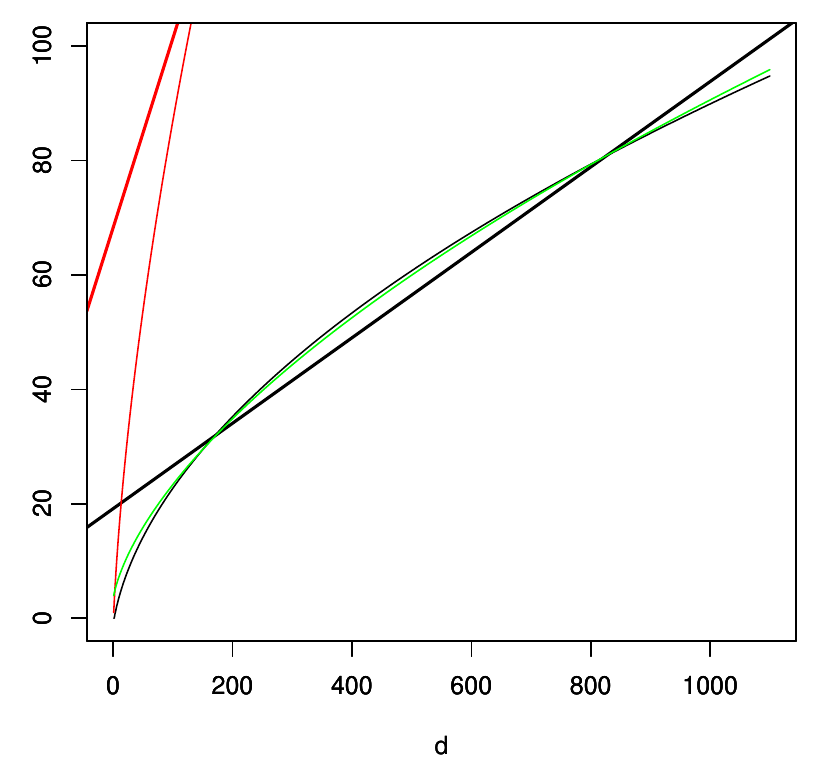}
			\includegraphics[scale=0.49]{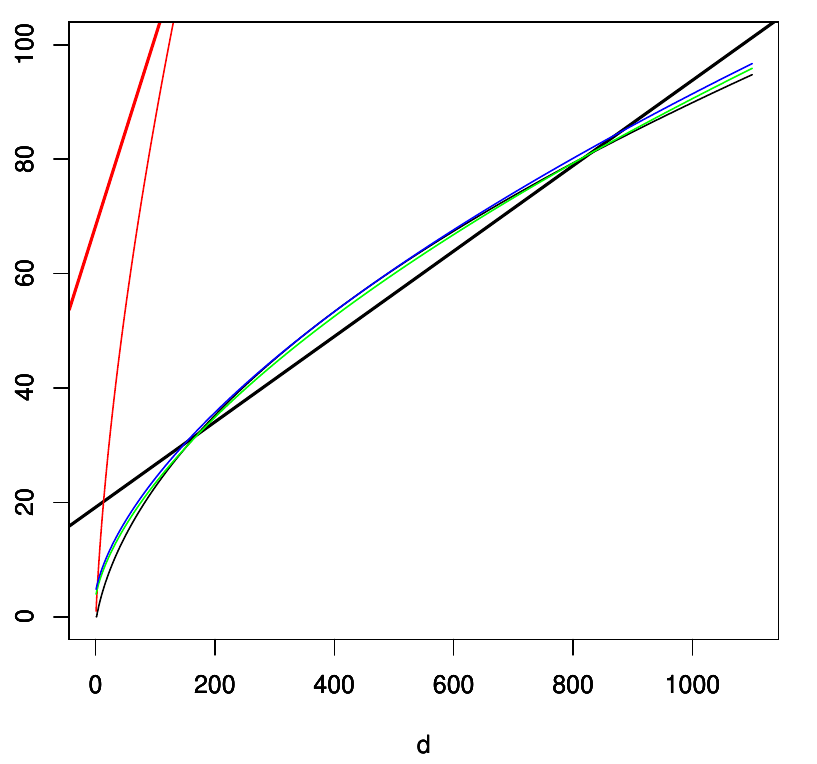}
			\caption{$1\leq d\leq D=1100$. LEFT: beyond the black and red lines of Fig.~\ref{step2}, the values $\widehat{up}(d)$, as given in~\eqref{trasf}, are plotted in green. We have  $\max_d\vert \widehat{up}(d)-\log(O_d)\vert=4.4289158466$, while $\max_d\vert{up(d)}-\log(O_d)\vert= 313.681462325$.
				RIGHT: beyond all previous plots, the values $\widehat{\widehat{up}}(d)$, as given in~\eqref{trasfbest}, are plotted in blue. Moreover, we obtain the value $\max_d(\log(O_d)-\widehat{up}(d))=0.840217291747$, whereas $\max_d\left(\log(O_d)-\widehat{\widehat{up}}(d)\right)=0$. Finally, $\max_d\left(\widehat{\widehat{up}}(d)-\log(O_d)\right)=5.26913313834$.}\label{step3bis}
		\end{figure}


\medskip
\noindent{\bf Case D=1100: lower bound.}
Similarly, we apply the strategy proposed above also to the explicit part of the asymptotic lower bound in \eqref{eq:bound completo}: 
	\begin{equation} \label{LOWB}
	low(d):=\pi\sqrt{\frac{2(d-1)}{3}}
-\log\!\bigl(4(d-1)\sqrt3\bigr).
	\end{equation} 
Here we consider the finite range $2\leq d\leq 1100$, because the function $low(d)$ is not defined on $d=1,$ and, for some specific comparisons, we will consider $7<d\leq 1100$ because in $1<d\leq 7$  some irrelevant exceptions occur. We consider the following discrete function
\begin{equation}\label{trasf_lo}
		\widehat{low}(d):=(low(d)-\beta_{0,low})\frac{\beta_{1,\log(O_d)}}{\beta_{1,low}}+\beta_{0,\log(O_d)} \quad \text{ (affine rescaling)},
	\end{equation}
where the symbols $\beta_{0,low}$, $\beta_{1,low}$, $\beta_{0,\log(O_d)}$, $\beta_{1,\log(O_d)}$ are $\beta_{0,f}$ and $\beta_{1,f}$ with, respectively, $f(d)=low(d)$ and $f(d)=\log(O_d)$, referring to the least-squares function $L_{f}(d) = \beta_{0,f} + \beta_{1,f} d$ that best fits the data points $(d,f(d))$ in the interval $2\leq d\leq D$. 

Even in this case, the construction \eqref{trasf_lo} of $\widehat{low}(d)$ does not exclude that there exists a positive integer~$d$ such that $\widehat{low}(d)-\log(O_d)>0$, namely $\widehat{low}(d)>\log(O_d)$ (see the second line of Table \ref{tab:table2_lo}). Hence, we also consider the function
		\begin{equation}\label{trasfbest_lo}
		\widehat{\widehat{low}}(d):= \widehat{low}(d) - \max_{d\leq D}(\widehat{low}(d)-\log(O_d)),
	\end{equation}
because by definition it satisfies $\log(O_d)\geq \widehat{\widehat{low}}(d)$, for every integer $2\leq d\leq D$, with  $-\max_{d\leq D}(\widehat{low}(d)-\log(O_d))=\min_{d\leq D}(\log(O_d)-\widehat{low}(d))$.

{\em We highlight that both the functions $\widehat{low}(d)$ and $\widehat{\widehat{low}}(d)$ depend on the value of $D$, although we are choosing a notation that does not make this dependence explicit. When necessary, we will make this dependence explicit even in the notation.}

The function~\eqref{trasfbest_lo} turns out to be a better empirical lower bound for $\log(O_d)$ than the function $low(d)$ in the interval $7< d\leq D=1100$, analogously to the above upper bound case.

Using \eqref{trasf_lo} and \eqref{trasfbest_lo}, we can write 
\begin{equation}\label{linear_transLOWER}
\widehat{\widehat{low}}(d)= \left(\frac{\beta_{1,\log(O_d)}}{\beta_{1,low}}\right) low(d)+\left(\beta_{0,\log(O_d)}-\beta_{0,low}\frac{\beta_{1,\log(O_d)}}{\beta_{1,low}}- \max_{d\leq D}(\widehat{low}(d)-\log(O_d))\right),
\end{equation}
and using the numerical values in Tables \ref{tab:table1_lo} and \ref{tab:table2_lo}, we obtain the approximate expression
\begin{equation}\label{eq:calibration_lo}
\widehat{\widehat{low}}(d)\approx 1.26150754299\ low(d)-1.30986731154,
\end{equation}
which takes values closer to those of $\log(O_d)$ than $low(d)$ for $7 < d\leq 1100$.
We stress that the identities in Table \ref{tab:table2_lo} are exact for the function defined in \eqref{trasfbest_lo}, from which 
$$
\widehat{\widehat{low}}(d)=(low(d)-\beta_{0,low})\frac{\beta_{1,\log(O_d)}}{\beta_{1,low}}+\beta_{0,\log(O_d)}- \max_{d\leq D}(\widehat{low}(d)-\log(O_d)),$$
with $\max_{d\leq D}(\widehat{low}(d)-\log(O_d))=0.0554619907174,$
while small discrepancies may appear if one recomputes it using the rounded coefficients in \eqref{eq:calibration_lo}.

\begin{table}[htbp]
\centering
\caption{Coefficients of fitting lines of $\log(O_d)$ and $low(d)$ for the range $2 \leq  d\leq D=1100$ and slope of the empirical fit for $\log(O_d)$}
\label{tab:table1_lo}
\begin{tabular}{l l}
\hline \\[-8pt]
$\beta_{0,\log(O_d)} = 19.1970929$ & $\displaystyle{\beta_{1,\log(O_d)} = 
0.0745804584}$ \\[3pt]
\hline \\[-8pt]
$\beta_{0,low} = 
16.2119508
$ & $\beta_{1,low} = 
0.0591201050$ \\[3pt] \hline \\[-8pt]
$\beta_{1,\log(O_d)}/\beta_{1,low} = 
1.26150754299$ & \\[3pt]
\hline
\end{tabular}
\end{table}

\begin{table}[htbp]
\centering
\caption{Discrepancies between $\log(O_d)$ and $low(d)$, $\widehat{low}(d)$, $\widehat{\widehat{low}}(d)$, for $7< d\leq D=1100$}
\label{tab:table2_lo}
\begin{tabular}{l l l}
\hline \\[-8pt]
$\min_d (\log(O_d)-low(d)  )$ & $=$ &$ 0.0556814562012$ \\[3pt]
\hline \\[-8pt]
$\min_d ( \log(O_d) -\widehat{low}(d))$ &$=$ &$-0.0554619907174$ \\[3pt]
\hline \\[-8pt]
$\min_d \left(\log(O_d)-\widehat{\widehat{low}}(d)\right)$ & $=$  &$0
$ \\[3pt]
\hline
\hline \\[-8pt]
$\max_d \vert \log(O_d)-low(d) \vert$      &$=$ & $18.677299565

$ \\[3pt]
\hline \\[-8pt]
$\max_d \vert \log(O_d)-\widehat{low}(d) \vert$ &$=$ &$   0.462761696891$ \\[3pt]
\hline \\[-8pt]
$\max_d\left(\log(O_d)- \widehat{\widehat{low}}(d)\right)$ &$=$ &$ 0.518223687609
$ \\[3pt]
\hline
\end{tabular}
\end{table}

In Figure \ref{figlower} plots in R of the discrete functions $\log(O_d), low(d), \widehat{low}(d), \widehat{\widehat{low}}(d)$ are shown. We remark that even if the better approximation provided by $\widehat{\widehat{low}}(d)$ than that of $\widehat{low}(d)$ is not clearly visible in the figures, the improvement is evident from comparing the second and the third line of Table~\ref{tab:table2_lo}.


		\begin{figure}[h]
			\centering
			\includegraphics[scale=0.32]{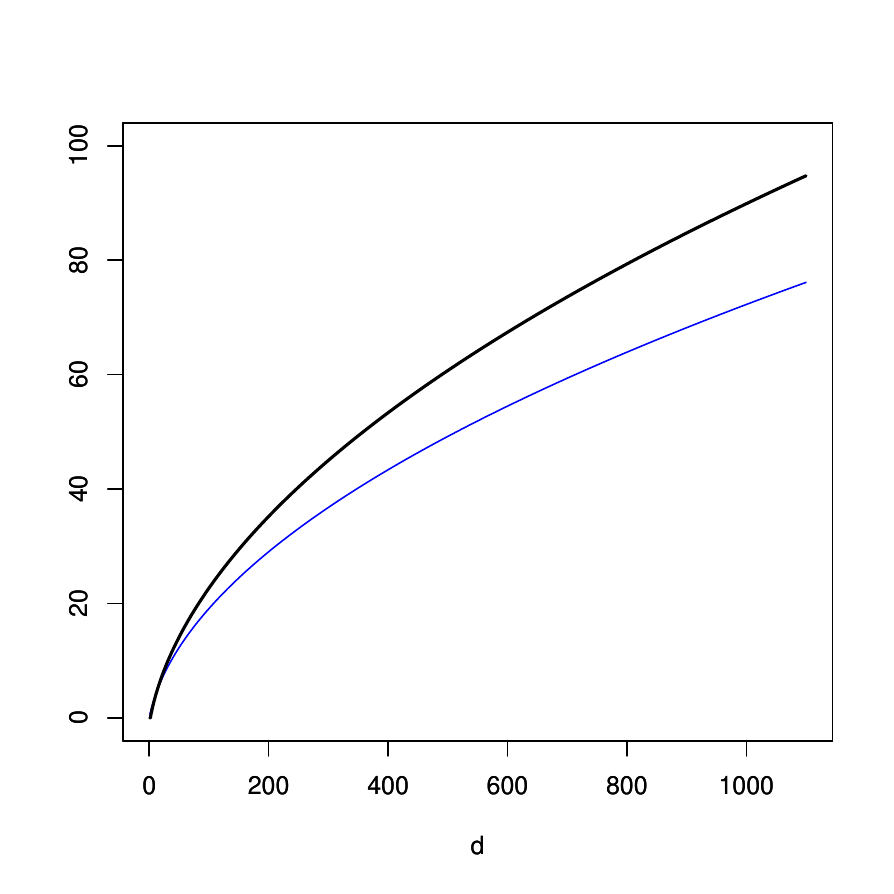}
			\includegraphics[scale=0.32]{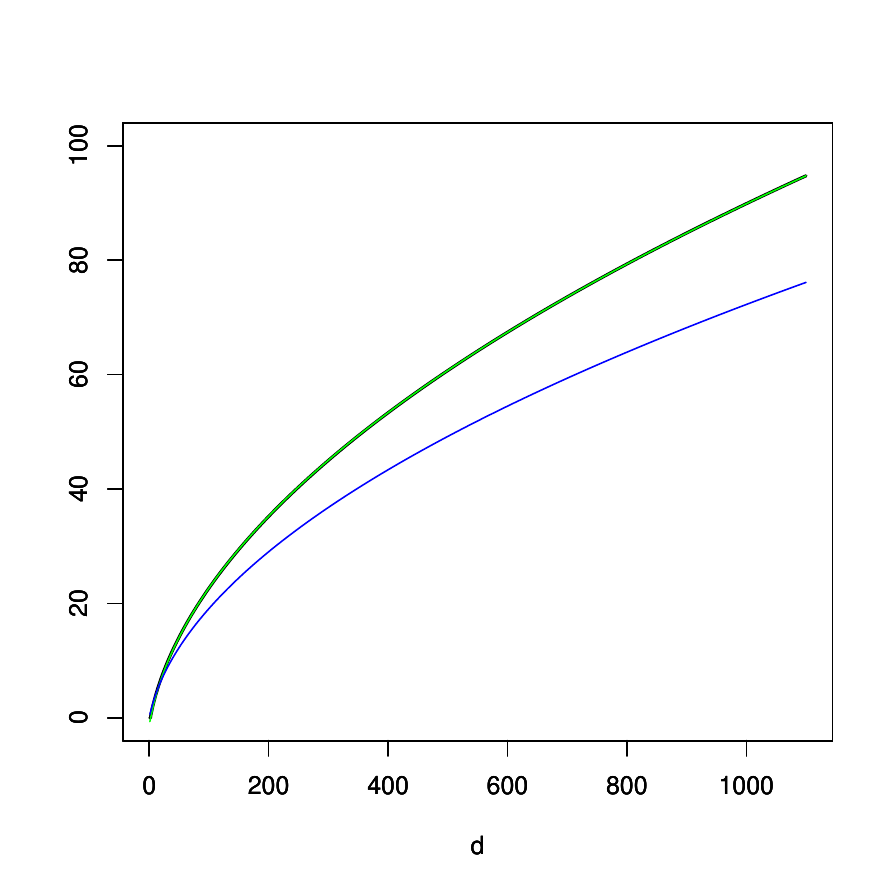}
\includegraphics[scale=0.32]{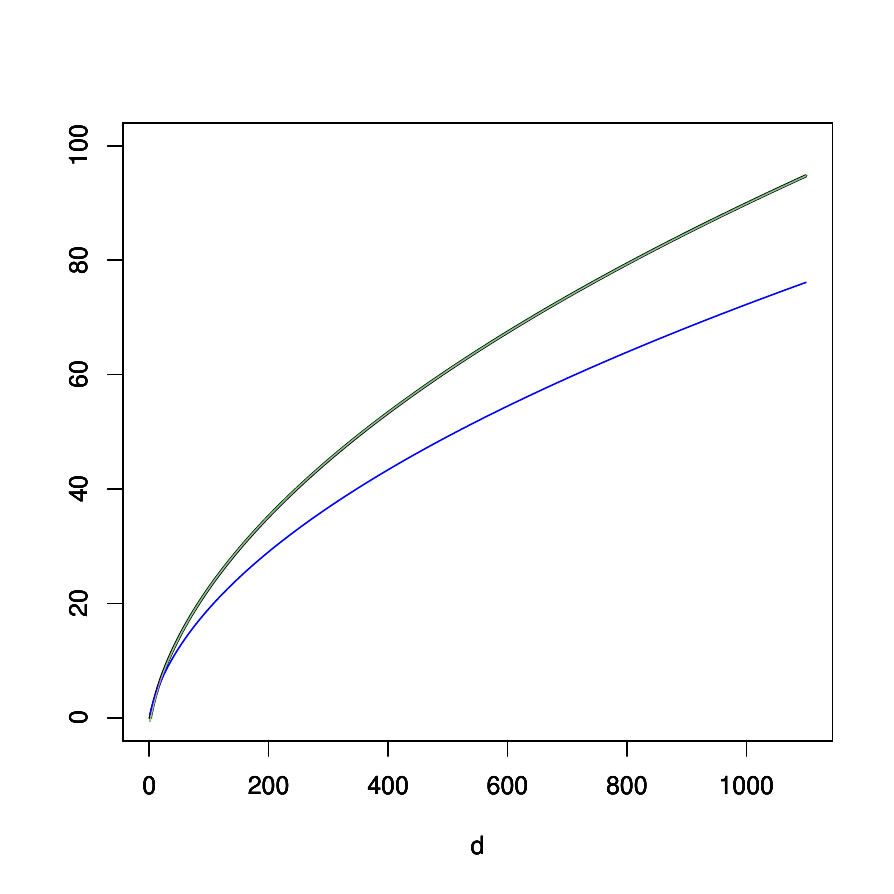}
            \caption{In blue the plot of the function $low(d)$, for $2\leq d\leq D=1100$. In the same finite range, the plot of the values of $\log(O_d)$ (black) on the left, the plot of the values of $\widehat{low}(d)$ (green) in the middle, and the plot of the values of $\widehat{\widehat{low}}(d)$ (magenta) on the right. Note that in all sub-figures we plotted $\log(O_d)$ (black), even if in the middle and in the right it is overwritten by $\widehat{low}(d)$ and $\widehat{\widehat{low}}(d)$, respectively.}
            \label{figlower}
		\end{figure}

\subsection{Empirical prediction estimates for upper and lower bounds}

In order to construct a possible prediction for the upper bound $\widehat{\widehat{up}}(d)$ for $d>1100,$ we follow the following strategy.  From \eqref{linear_transUP}, considering only the first 250 values of $d,$ we construct:
\begin{equation}\label{linear_transUP250}
\widehat{\widehat{up}}_{250}(d):= \left(\frac{\beta_{1,\log(O_d)}}{\beta_{1,up}}\right) up(d)+\left(\beta_{0,\log(O_d)}-\beta_{0,up}\frac{\beta_{1,\log(O_d)}}{\beta_{1,up}}+\max_{d\leq D}(\log(O_d)-\widehat{up}(d))\right),
\end{equation}
where $D=250$ and the calculation of the parameters $\beta_{0,up}$, $\beta_{1,up}$, $\beta_{0,\log(O_d)}$, $\beta_{1,\log(O_d)}$ refers to the interval $1\leq d \leq 250.$
Hence, for $1\leq d \leq D=250$,  we obtain
\begin{equation}\label{linear_transUP250values}
\widehat{\widehat{up}}_{250}(d)=  0.252677948393 \, 
 up(d) +0.733721091341.
\end{equation}
Then, we provide a prediction estimate of the upper bound by evaluating  $\widehat{\widehat{up}}_{250}(d)$ for successive values of $d,$ i.e. $251\leq d \leq 1100$. In order to verify graphically the proximity of this predicted estimated upper bound $\widehat{\widehat{up}}_{250}(d)$ to the values of $\log(O_d),$ we plot it (in orange) on left of Figure \ref{figpredictionUP}.

\begin{figure}[h]
			\centering
			\includegraphics[scale=0.49]{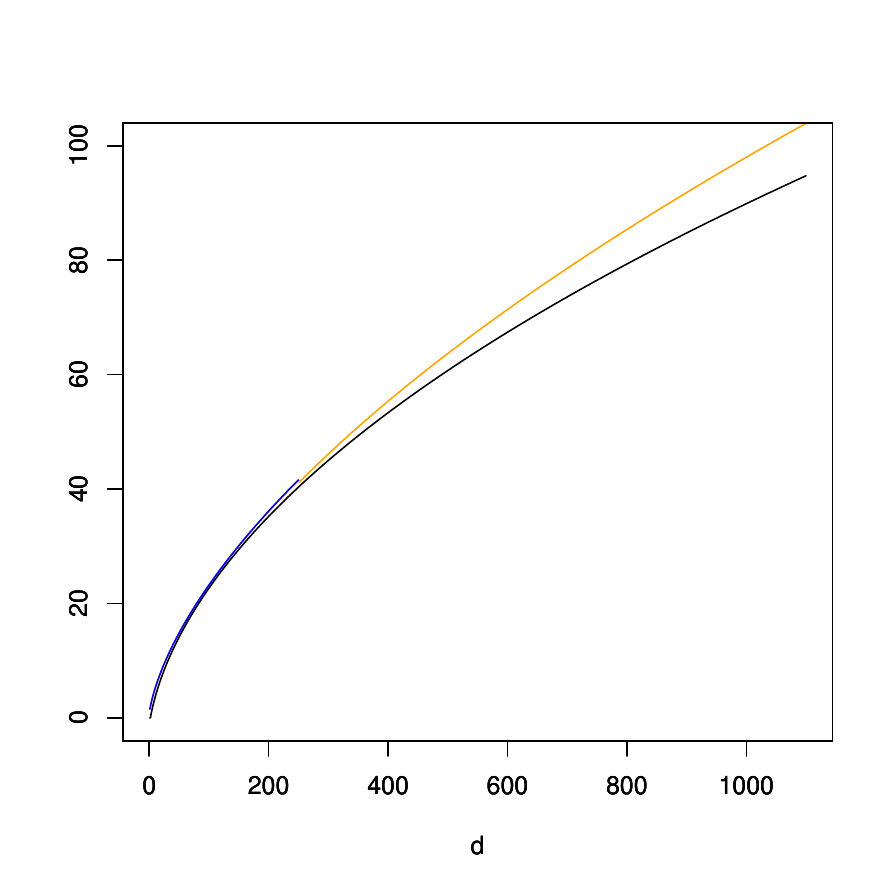}
    \includegraphics[scale=0.49]{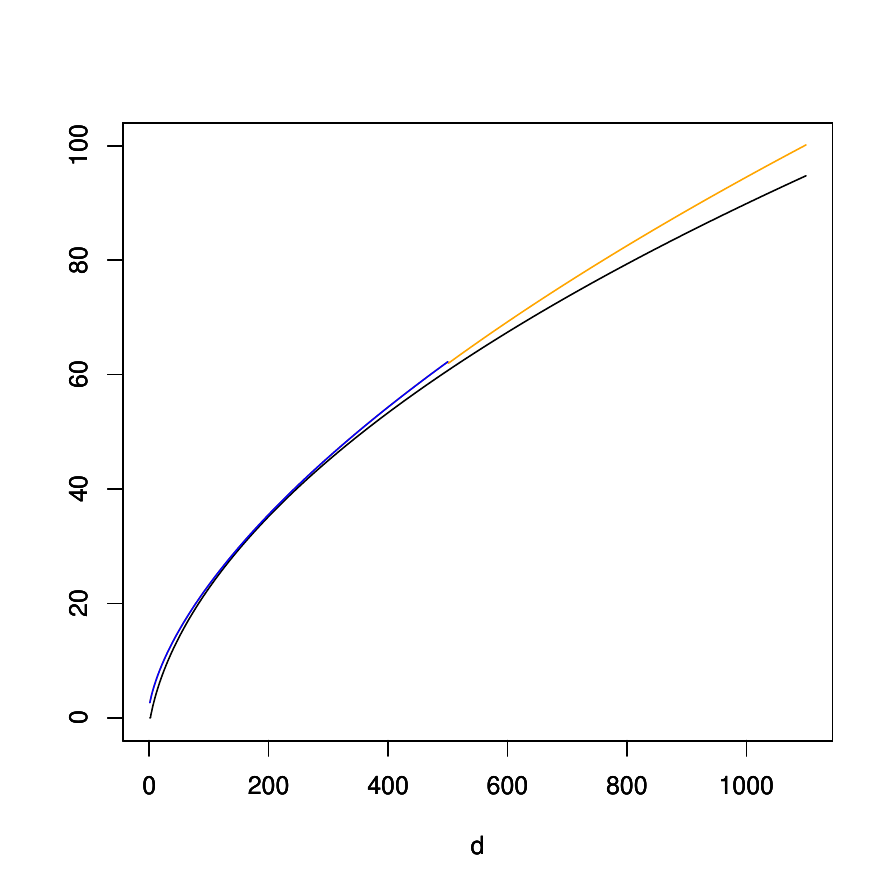}
            \caption{Prediction estimates of upper bound. LEFT: $\widehat{\widehat{up}}_{250}(d)$ as in \eqref{linear_transUP250values} (in blue) for $1\leq d\leq 250$, and its prediction estimate (in orange) for $251\leq d\leq 1100$; RIGHT: $\widehat{\widehat{up}}_{500}(d)$ as in \eqref{linear_transUP500values} (in blue) for $1\leq d\leq 500$, and its prediction estimate (in orange) for $501\leq d\leq 1100$. Note that $\max_{1\leq d\leq 1100}\left(\widehat{\widehat{up}}_{250}(d)-\log(O_d)\right)=9.16621718467$, whereas $\max_{1\leq d\leq 1100}\left(\widehat{\widehat{up}}_{500}(d)-\log(O_d)\right)=5.39351115845.$}
            \label{figpredictionUP}
		\end{figure}
        
\begin{figure}[h]
			\centering
			\includegraphics[scale=0.49]{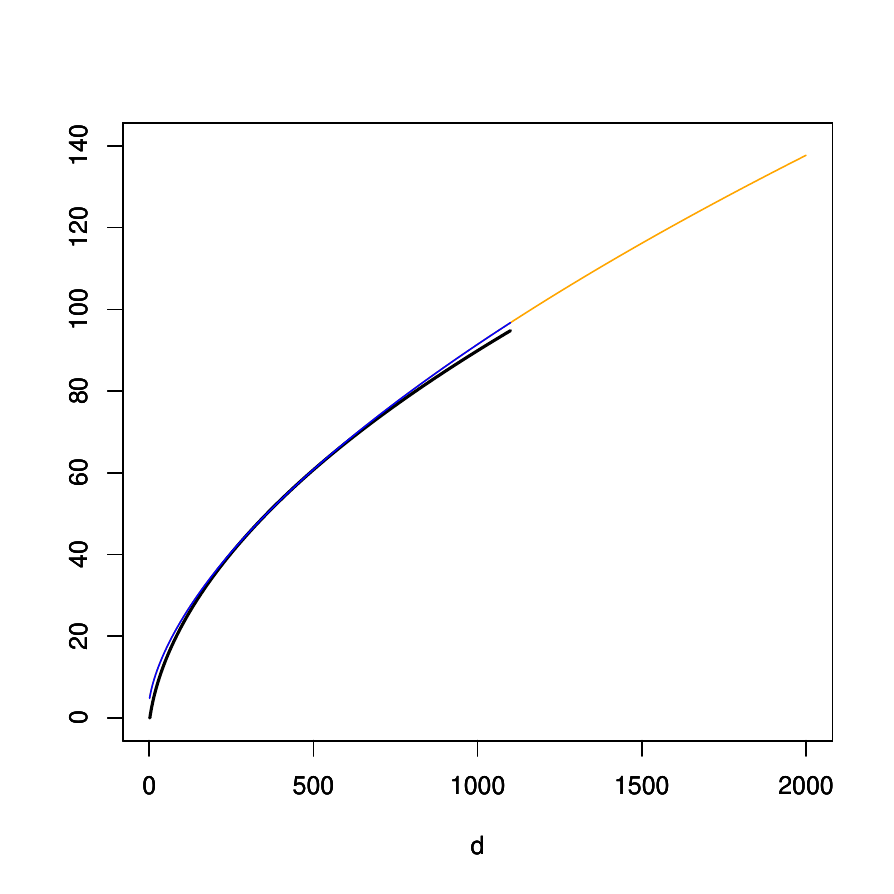}
    \includegraphics[scale=0.49]{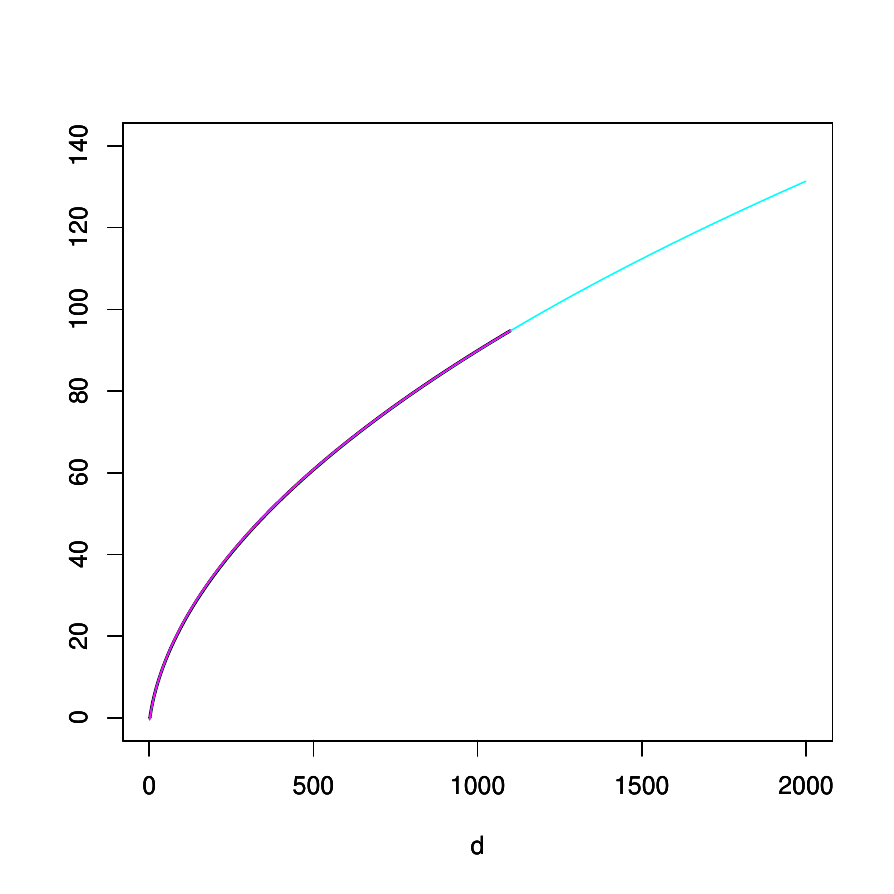}
            \caption{Prediction estimates until $d=2000.$ LEFT: $\widehat{\widehat{up}}(d)$ (in blue) as in  \eqref{eq:calibration} evaluated for $1\leq d\leq 1100$, and its prediction estimate (in orange) as in  \eqref{eq:calibration} evaluated for $1101\leq d\leq 2000$. RIGHT: $\widehat{\widehat{low}}(d)$ (in magenta) as in  \eqref{eq:calibration_lo} evaluated for $2\leq d\leq 1100$, and its prediction estimate  (in cyan) as in  \eqref{eq:calibration_lo} evaluated for $1101\leq d\leq 2000$. In both sides the plot of $\log(O_d)$ (in black)  for $1\leq d\leq 1100$ is also provided.}
            \label{figpredictionUPandLOWER}
		\end{figure}
        
\begin{figure}[h]
			\centering
    \includegraphics[height=7.6cm]{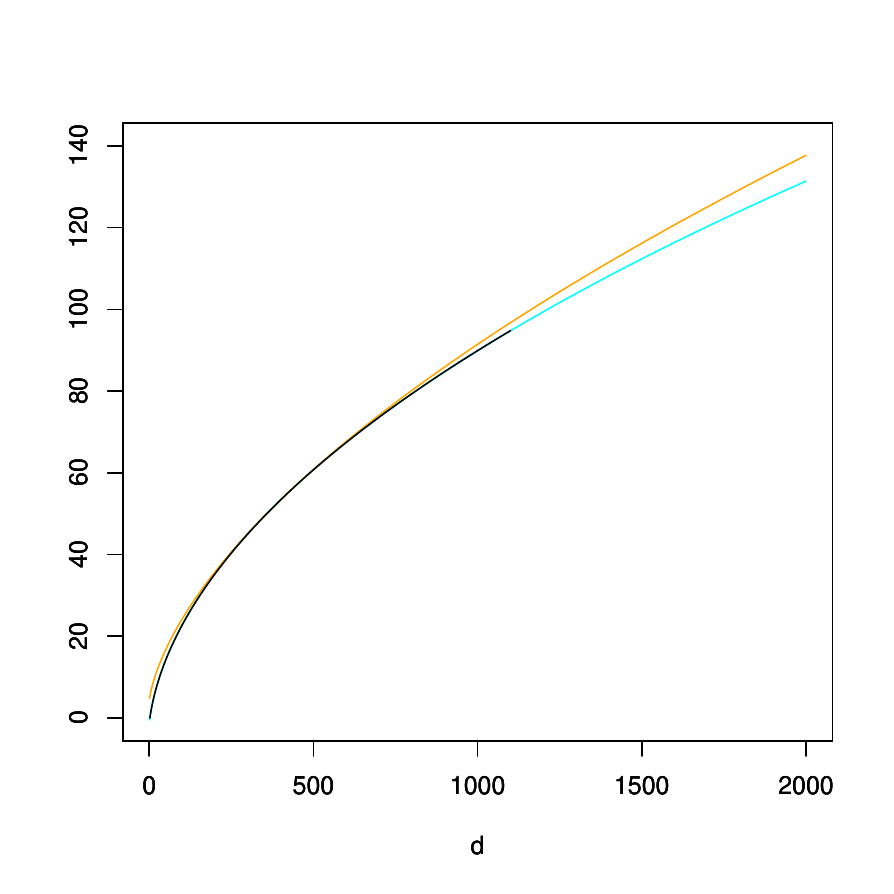}
\includegraphics[height=6.65cm,width=0.475\textwidth]{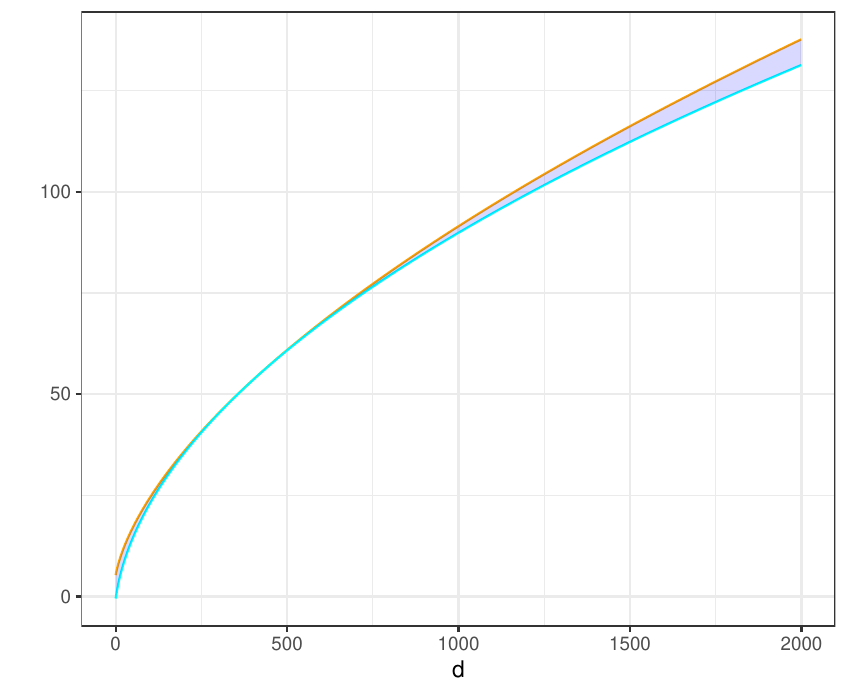}
\caption{Prediction zone until $d=2000.$ LEFT: first, for $1\leq d \leq D=1100,$  $\widehat{\widehat{up}}(d)$ (in orange) as in  \eqref{eq:calibration} and $\widehat{\widehat{low}}(d)$ (in cyan) as in  \eqref{eq:calibration_lo}. Then, the prediction zone delimited by the prediction estimate of the empirical calibration  $\widehat{\widehat{up}}(d)$ (in orange) as in  \eqref{eq:calibration} and by the prediction estimate of the empirical calibration $\widehat{\widehat{low}}(d)$ (in cyan) as in  \eqref{eq:calibration_lo}, both evaluated for $1101\leq d \leq 2000.$ The plot of $\log(O_d)$ (in black)  for $1\leq d\leq 1100$ is also provided. The maximum width of the prediction zone is given by means of  $\max_{2\leq d \leq 2000}(\widehat{\widehat{up}}(d)-\widehat{\widehat{low}}(d))=6.31289617142$. RIGHT: The same, but without black plot of $\log(O_d)$ and with shaded predicted zone.}
        \label{figpredictionStrip}
		\end{figure}
In order to show how the prediction estimate of the upper bound improves with increasing the size of the data sample, we apply the same procedure again for values $1\leq d \leq 500.$ Specifically, again from \eqref{linear_transUP}, considering the first 500 values of $d,$  we construct for $1\leq d \leq D=500:$
\begin{equation}\label{linear_transUP500values}
\widehat{\widehat{up}}_{500}(d) :=   0.240139019901 \, 
 up(d) +2.08262890146.
\end{equation}
We provide a second prediction estimate of the upper bound by evaluating  $\widehat{\widehat{up}}_{500}(d)$ for subsequent values of $d,$ i.e. $501\leq d \leq 1100$. It is possible to compare graphically the prediction estimates for the  upper bound $\widehat{\widehat{up}}_{500}(d)$ plotted (in orange) on right of Figure \ref{figpredictionUP} and $\widehat{\widehat{up}}_{250}(d)$ plotted (in orange) on left of Figure \ref{figpredictionUP}. 

Finally, by considering that
$$\max_{1\leq d\leq 1100}\left(\widehat{\widehat{up}}_{250}(d)-\log(O_d)\right)=9.16621718467> \max_{1\leq d\leq 1100}\left(\widehat{\widehat{up}}_{500}(d)-\log(O_d)\right)=5.39351115845$$
we see that the prediction estimate $\widehat{\widehat{up}}_{500}(d)$ is better than $\widehat{\widehat{up}}_{250}(d).$
From the last line of Table~\ref{tab:table2} we also highlight that $\max_{1\leq d\leq 1100}\left(\widehat{\widehat{up}}_{1100}(d)-\log(O_d)\right)=5.26913313834$, so that this difference appears to decrease, but more slowly, suggesting a stabilization.

Consequently, our preferred empirical prediction estimate is obtained by evaluating $\widehat{\widehat{up}}(d)$ as in~\eqref{eq:calibration} for successive values of $d$, for instance for $1101\leq d \leq 2000.$ 

We apply an analogous strategy also to the lower bound $\widehat{\widehat{low}}(d)$ as in \eqref{eq:calibration_lo} by evaluating it on $1101\leq d \leq 2000.$ The plots of these prediction estimates are in Figure \ref{figpredictionUPandLOWER}.

Finally, in Figure \ref{figpredictionStrip}, we are able to provide the plot of the zone confined by the prediction estimate of the upper bound $\widehat{\widehat{up}}(d)$ (in orange) as in  \eqref{eq:calibration} and by the prediction estimate of the lower bound $\widehat{\widehat{low}}(d)$ (in cyan) as in  \eqref{eq:calibration_lo}, both evaluated for $1101\leq d \leq 2000.$ The quantitative information  $$\max_{2\leq d \leq 2000}(\widehat{\widehat{up}}(d)-\widehat{\widehat{low}}(d))=6.31289617142$$
can be used as a measure of the width of the prediction zone, but it is also the maximum width of the prediction zone of the provided estimates.

Since the values of $O_d$ are not known in the range $1101 \leq d \leq 2000$, the curves of Figures~\ref{figpredictionUPandLOWER} and~\ref{figpredictionStrip} should be interpreted as empirical prediction estimates rather than as rigorously proved upper and lower bounds. Nevertheless, the validation tests on the known range $1\le d\le 1100$, together with the stability observed when increasing the calibration range from $250$ to $500$ and then to $1100$, provide evidence that these estimates should remain close to the actual values of $\log(O_d)$.

\section{Conclusions}

In this paper, starting from an iterative formula for the computation of $O_d$, we established that the sequence $(A_{d+2})_{d\geq 1}$ is sub-Fibonacci. This result gave an enhancement of the sub-Fibonacci behavior of $(O_d)_{d\geq 1}$. We then translated the iterative formula into an effective and memory-efficient algorithm, which allowed the computation of $O_d$ for all $1\leq d\leq 1100$. The resulting data set made it possible to compare the observed values of $\log(O_d)$ with the asymptotic upper and lower bounds of Stanley and Zanello, and to propose empirical finite-range  calibrations, in the interval $1\leq d\leq 1100$, that better reflect the behavior of the data while preserving the expected growth scale. These results provide new quantitative information on finite O-sequences in a finite range. The method can also be  used in all intervals where the values of $O_d$ are known. Moreover, as a consequence of the strong Cesàro convergence of the rate sequence $(O_d/O_{d-1})_{d\geq 2}$ to $1$, we observe that the Stanley-Zanello upper bound implies a negative answer to a question formulated by Roberts in \cite{RR}, under the condition that $(O_d/O_{d-1})_{d\geq 2}$ converges in the usual sense.


\section*{Acknowledgements}
The first author is a member of GNSAGA (INdAM, Italy). The third author is a member of GNCS (INdAM, Italy).



\providecommand{\bysame}{\leavevmode\hbox to3em{\hrulefill}\thinspace}
\providecommand{\MR}{\relax\ifhmode\unskip\space\fi MR}
\providecommand{\MRhref}[2]{%
  \href{http://www.ams.org/mathscinet-getitem?mr=#1}{#2}
}
\providecommand{\MRhref}[2]{#2}

\end{document}